\documentclass[12pt]{amsart}
\usepackage{amsmath,amsthm,amsfonts,amssymb,eucal}



\renewcommand{\b}{\beta}

\newcommand{\g}{\gamma}

\renewcommand{\d}{\delta}

\newcommand{\z}{\zeta}
\renewcommand{\t}{\theta}

\newcommand{\p}{\partial}

\newcommand{\R}{ \mathbb R}

\newcommand{\SN}{{\sf{N}}}

\newcommand {\bxi}{\boldsymbol\xi}


\newcommand{\CB}{\mathcal B}

\newcommand{\CD}{\mathcal D}


\newcommand{\plainW}[2]{\textup{{\textsf{W}}}^{#1, #2}}
\newcommand{\plainC}[1]{\textup{{\textsf{C}}}^{#1}}

\newcommand{\plainL}[1]{\textup{{\textsf{L}}}^{#1}}
\newcommand{\plainl}[1]{\textup{{\textsf{l}}}^{#1}}

\DeclareMathOperator{\tr}{{tr}}

\newcommand{\1}
{{\,\vrule depth3pt height9pt}{\vrule depth3pt height9pt}
{\vrule depth3pt height9pt}{\vrule depth3pt height9pt}\,}

\DeclareMathOperator{\supp}{{supp}}




\hfuzz1pc 
\vfuzz1pc

\newtheorem{thm}{Theorem}[section]
\newtheorem{cor}[thm]{Corollary}
\newtheorem{lem}[thm]{Lemma}
\newtheorem{prop}[thm]{Proposition}
\newtheorem{cond}[thm]{Condition}

\theoremstyle{definition}

\newtheorem{rem}[thm]{Remark}

\numberwithin{equation}{section}

%
%

\newcommand{\bee}{\begin{equation}}
\newcommand{\ene}{\end{equation}}
\newcommand{\bees}{\begin{equation*}}
\newcommand{\enes}{\end{equation*}}
\newcommand{\bes}{\begin{split}}
\newcommand{\ens}{\end{split}}

\newcommand{\bet}{\begin{thm}}
\newcommand{\ent}{\end{thm}}
\newcommand{\bel}{\begin{lem}}
\newcommand{\enl}{\end{lem}}
\newcommand{\bec}{\begin{cor}}
\newcommand{\enc}{\end{cor}}
\newcommand{\bep}{\begin{proof}}
\newcommand{\enp}{\end{proof}}
\newcommand{\ber}{\begin{rem}}
\newcommand{\enr}{\end{rem}}

\newcommand{\Z}{\mathbb Z}

\setlength{\textwidth}{450pt }


\begin{document}
\hoffset -4pc

\title
[ Trace formulas]
{{On a coefficient 
in trace formulas for Wiener-Hopf operators}}
 \author{Alexander V. Sobolev}
 \address{Department of Mathematics\\ University College London\\
Gower Street\\ London\\ WC1E 6BT UK}
\email{a.sobolev@ucl.ac.uk}
\keywords{Wiener-Hopf operators, trace formula}
\subjclass[2010]{Primary 47B35; Secondary 47B10}

\begin{abstract}
Let $a = a(\xi), \xi\in\R,$ 
be a smooth function quickly decreasing at infinity. 
For the Wiener-Hopf operator $W(a)$ with the  
symbol $a$, and a smooth 
function $g:\mathbb C\to~\mathbb C$, H. Widom in 1982 established the following trace formula:
\[
\tr\bigl(g\bigl(W(a)\bigr) - W(g\circ a)\bigr) = \CB(a; g),
\]
where $\CB(a; g)$ is given explicitly in terms of the functions 
$a$ and $g$. The paper analyses the coefficient $\CB(a; g)$ 
for a class of non-smooth functions $g$ assuming that $a$ is real-valued. 
A representative example of one such function is $g(t) = |t|^{\g}$ with some 
$\g\in (0, 1]$. 
\end{abstract}

\maketitle

\section{Introduction}

Let $a:\R\to\mathbb C$ be a function. 
On $\plainL2(\R_+)$, $\R_+ = (0, \infty),$ 
define the Wiener-Hopf operator $W(a)$ with symbol $a$ by 
\begin{equation*}
\bigl(W(a)u\bigr)(x)
= \chi_+(x) \frac{1}{2\pi}\int e^{i(x-y)\xi} a(\xi) \chi_+(y)u(y) dy d\xi,\ 
u\in \plainL2(\R_+),
\end{equation*}  
where $\chi_+$ is the indicator of the half-line $\R_+$. 
If the limits are not specified, we always assume that 
the integration is taken over the entire line. 
We are interested in the operator 
\begin{equation}\label{WH:eq}
g\bigl(W(a)\bigr) - W(g\circ a),
\end{equation}
with a suitable function $g:\mathbb C\to\mathbb C$. 
In \cite{Widom_82}, 
see also \cite{Widom_87}, H. Widom proved that this operator is trace class if 
\begin{equation}\label{sobolev:eq}
a\in\plainL\infty(\R),\ \ 
	\iint\frac{|a(\xi_1) - a(\xi_2)|^2}{|\xi_1-\xi_2|^2} d\xi_1 d\xi_2<\infty, 
	\end{equation}
and established the following remarkable trace formula for the operator in \eqref{WH:eq}. 
 For any function $g: \mathbb C\to\mathbb C$ and any $s_1, s_2\in\mathbb C$ 
 denote
 \begin{equation}\label{U:eq}
 U(s_1, s_2; g) 
 = \int_0^1 \frac{g\bigl((1-t)s_1 + t s_2\bigr) 
 	- [(1-t)g(s_1) + t g(s_2)]}{t(1-t)} dt,
 \end{equation} 
 and introduce 
 \begin{equation}\label{cb:eq}
 \CB(a; g) = \frac{1}{8\pi^2}\iint 
 \frac{U\bigl(a(\xi_1), a(\xi_2); g\bigr)}{|\xi_1-\xi_2|^2}
 d\xi_1 d\xi_2.
 \end{equation}
 Both objects are well-defined under the conditions of the next proposition:
 
 \begin{prop}\label{Widom_82:prop}
 	[see \cite{Widom_82}, Theorem 1(a)]  
 	Suppose that 
 	 \eqref{sobolev:eq} is satisfied, 
 	and let $g$ be analytic on a 
 	neighbourhood of the closed convex hull of 
 	the function $a$. Then the operator 
 	\eqref{WH:eq} is trace class and 
 	\begin{equation}\label{Widom_82:eq}
 	\tr \bigl[
 	g\bigl(W(a)\bigr) - W(g\circ a)\bigr]
  	= \CB(a; g).
 	\end{equation}
 \end{prop}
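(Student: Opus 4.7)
The plan is to first establish \eqref{Widom_82:eq} for polynomials $g(z) = z^n$ by a direct operator-algebraic computation, and then extend to arbitrary analytic $g$ through holomorphic functional calculus. For the polynomial case, the key algebraic input is the standard commutator identity for Wiener--Hopf operators, which in Fourier variables reads
\[
W(a)W(b) - W(ab) = - P_+ M_a P_- M_b P_+,
\]
where $P_\pm$ are the Hardy projections corresponding to $\plainL2(\R_+)$ (on the $\xi$-line) and $M_c$ denotes multiplication by $c$. The kernel of $P_+ M_a P_-$ is proportional to the divided difference $(a(\xi_1)-a(\xi_2))/(\xi_1-\xi_2)$, so \eqref{sobolev:eq} renders it Hilbert--Schmidt, making the commutator trace class. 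Iterating via the telescoping identity
\[
W(a)^n - W(a^n) = \sum_{k=1}^{n-1} W(a)^{k-1}\bigl[W(a)W(a^{n-k}) - W(a^{n-k+1})\bigr]
\]
yields a finite sum of trace-class operators. I would evaluate each trace as an iterated $\xi$-integral and, using cyclicity to contract the iterated divided-difference kernels down to two outer variables $\xi_1, \xi_2$, recast the result as
\[
\frac{1}{8\pi^2}\iint \frac{P_n(a(\xi_1),a(\xi_2))}{|\xi_1-\xi_2|^2}\, d\xi_1 d\xi_2
\]
for an explicit symmetric polynomial $P_n$. Matching $P_n$ with the beta-integral evaluation of $U(s_1,s_2;z^n)$ coming directly from \eqref{U:eq} completes the polynomial case.

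To extend to analytic $g$, I would choose a positively oriented contour $\Gamma$ enclosing $\overline{\mathrm{conv}}\, a(\R)$ inside the domain of analyticity of $g$; this convex hull contains both $a(\R)$ and the numerical range (hence the spectrum) of $W(a)$. Since $z-W(a) = W(z-a)$ on $\plainL2(\R_+)$, the algebraic identity
\[
(z-W(a))^{-1} - W((z-a)^{-1}) = -(z-W(a))^{-1}\bigl[W(z-a)W((z-a)^{-1}) - W(1)\bigr]
\]
expresses the difference as a bounded resolvent multiplied by a Wiener--Hopf commutator of the symbols $z-a$ and $(z-a)^{-1}$. The commutator identity above applies, with $(z-a)^{-1}$ playing the role of $b$ and satisfying \eqref{sobolev:eq} uniformly for $z\in\Gamma$, so the bracket is trace class with trace norm uniformly bounded on $\Gamma$. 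Inserting
\[
g(W(a)) - W(g\circ a) = \frac{1}{2\pi i}\int_\Gamma g(z)\bigl[(z-W(a))^{-1} - W((z-a)^{-1})\bigr] dz
\]
and moving the trace under the integral, combined with the pointwise Cauchy identity $U(s_1,s_2;g) = (2\pi i)^{-1}\int_\Gamma g(z)\, U(s_1,s_2;(z-\cdot)^{-1})\, dz$ (immediate from \eqref{U:eq} when $s_1, s_2$ lie inside $\Gamma$), reassembles the integrand into $U(a(\xi_1),a(\xi_2);g)/|\xi_1-\xi_2|^2$, yielding \eqref{Widom_82:eq}.

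The principal obstacle is the combinatorial bookkeeping in the polynomial step: converting an iterated cyclic trace over products of divided-difference kernels into the clean symmetric double integral with $U(a(\xi_1),a(\xi_2);z^n)$ in the integrand demands careful alignment of beta-integral coefficients against the explicit expansion of $U$ in its arguments. Once this identification is secured, the contour argument in the second stage is a standard functional-calculus formality, with the exchange of trace and integral justified by dominated convergence against the uniform trace-norm bound on $\Gamma$.
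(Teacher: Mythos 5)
The paper does not prove Proposition \ref{Widom_82:prop}; it cites it verbatim from Widom's 1982 paper, so there is no in-text argument to compare against. Your two-stage plan (polynomials via operator algebra, then analytic $g$ via a Cauchy contour) is in fact the same strategy as Widom's original proof, and the algebraic ingredients you list --- the identity $W(a)W(b)-W(ab) = -\hat P_+M_a\hat P_-M_b\hat P_+$ after Fourier conjugation, the Hilbert--Schmidt criterion for $[\hat P_+,M_a]$ under \eqref{sobolev:eq}, the telescoping of $W(a)^n - W(a^n)$, and the resolvent identity $(z-W(a))^{-1} - W((z-a)^{-1}) = (z-W(a))^{-1}\bigl[W(a)W((z-a)^{-1}) - W\bigl(a(z-a)^{-1}\bigr)\bigr]$ --- are all correct. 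One small imprecision: the kernel of $P_+M_aP_-$ itself is not the divided difference; rather $P_+M_aP_- = [P_+,M_a]P_-$, and it is the commutator $[P_+,M_a]$ whose kernel is $(2\pi i)^{-1}(a(\xi_1)-a(\xi_2))/(\xi_1-\xi_2)$. The Hilbert--Schmidt conclusion survives this.

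There is, however, a genuine gap at the junction between the two stages. Your contour argument needs the \emph{resolvent} trace identity
\[
\tr\bigl[(z-W(a))^{-1} - W((z-a)^{-1})\bigr] = \CB\bigl(a;\, (z-\cdot)^{-1}\bigr),\qquad z\in\Gamma,
\]
and this does not follow from the polynomial case you establish in Stage 1: $g_z(w)=(z-w)^{-1}$ is not a polynomial, and the Cauchy formula expresses a general analytic $g$ in terms of resolvents, not the other way round. As written, the "reassembly" step silently assumes precisely the resolvent case of the theorem. To close this, you should either (i) expand $(z-W(a))^{-1}=\sum_{n\ge0}W(a)^n z^{-n-1}$ and $W((z-a)^{-1})=\sum_{n\ge0}W(a^n)z^{-n-1}$ for $|z|$ large, apply the polynomial result term by term, check trace-norm convergence of $\sum_n z^{-n-1}\bigl[W(a)^n-W(a^n)\bigr]$ and the corresponding absolute convergence of $\sum_n z^{-n-1}\CB(a;(\cdot)^n)$, and then extend to all $z$ off $\overline{\mathrm{conv}}\,a(\R)$ by noting that both sides are analytic in $z$ there; or (ii) compute $\tr\bigl\{(z-W(a))^{-1}[W(a)W(b)-W(ab)]\bigr\}$ directly by iterating $R = W(b) + RT$ with $T=W(a)W(b)-W(ab)$, which produces another combinatorial calculation parallel to Stage 1. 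Either route is standard but not automatic, and it is the load-bearing step your sketch leaves out. You should also note explicitly that the identity $W(g\circ a)=\frac{1}{2\pi i}\int_\Gamma g(z)\,W((z-a)^{-1})\,dz$, which your Cauchy representation uses on the symbol side, follows from the linearity and $\plainL\infty$-continuity of $a\mapsto W(a)$ together with the scalar Cauchy formula applied pointwise in $\xi$.
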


If $a$ real-valued, then the analyticity assumptions on $g$ can be replaced by some finite smoothness, see \cite{Widom_82}, Theorem 1(b). 
In paper \cite{Peller} the assumptions on $a$ and $g$ are relaxed even further: 
the formula  \eqref{Widom_82:eq} is proved for real-valued $a$ 
under the assumptions that the integral in \eqref{sobolev:eq} is finite 
and $g$ belongs to the Besov class $B^{2}_{\infty, 1}(\R)$.

The quantity $\CB(a; g)$ is an object that one encounters very often 
in the theory of Wiener-Hopf operators. 
It appears e.g. in \cite{Peller}, \cite{Roc}, 
\cite{Widom_80}, \cite{Widom_82}, \cite{Widom_85}, \cite{Widom_87} 
as an asymptotic coefficient in various trace 
formulas for truncated Wiener-Hopf and Toeplitz operators with 
smooth symbols. 
Moreover, the function $U(s_1, s_2; g)$ is 
present in a variety of trace formulas for 
the same operators with discontinuous symbols, 
see e.g. \cite{Basor}, \cite{Widom_821}, \cite{Sob}, \cite{Sob2} and references therein.  
Although the integral \eqref{U:eq} is well-defined for rather a wide class of functions $g$, 
the coefficient \eqref{cb:eq} itself has been considered so far for smooth functions $g$ only. 
 As observed in \cite{Widom_82}, 
 if $g$ is twice differentiable, we can integrate by parts in \eqref{U:eq} to obtain that 
 \begin{equation*}
 	U(s_1, s_2; g) = (s_1-s_2)^2 \int_0^1 g''\bigl(
 	(1-t)s_1 + t s_2
 	\bigr)
 	\bigl(
 	t\log t + (1-t) \log (1-t)
 	\bigr) dt.
 \end{equation*} 
 Thus, assuming that $g''$ is uniformly bounded, we obtain the estimate
 \begin{equation*}
 	|\CB(a; g)|\le  C \|g''\|_{\plainL\infty} 
 	\iint\frac{|a(\xi_1)-a(\xi_2)|^2}{|\xi_1-\xi_2|^2}d\xi_1 d\xi_2,
 \end{equation*}
 with a universal constant $C>0$, which guarantees the finiteness of $\CB(a; g)$ 
 under the condition \eqref{sobolev:eq}. 
However, in applications one often needs 
 non-smooth functions, see e.g. \cite{GiKl}, 
 \cite{HLS}, \cite{LeSpSo}, \cite{LeSpSo1} and references therein. 
 The main aim of this paper is to investigate the coefficient \eqref{cb:eq} 
 for real-valued symbols $a$ and non-smooth 
functions $g:\R\mapsto \mathbb C$, 
described in Condition \ref{f:cond} further on. A representative example 
of one such 
function is $g(t) = |t|^\g$ with some $\gamma\in (0, 1]$.  
Surprisingly, even finiteness of $\CB(a; g)$ for such a function is far from trivial. 
The main result 
(see Theorem \ref{bbound:thm}) is a bound 
on the coefficient $\CB(a; g)$ 
that explicitly depends on the symbol $a$ and function $g$.  
Formula \eqref{Widom_82:eq} for non-smooth functions $g$ is proved in \cite{LSS3}.

Henceforth by $C$ and $c$ with or without indices we denote various positive constants 
whose precise value is of no importance. 
The value of constants may vary from line to line.

 \section{Smooth functions $g$}

 Before embarking on the formulation of the main theorem we provide some 
 useful information on the smooth case.   
 First we show how to extend formula \eqref{cb_pv:eq} to $\plainC{1, \varkappa}$-functions.  
 Rewrite $U(s_1, s_2; g)$ in a different way introducing the integral
 \begin{equation}\label{V:eq}
 V(s_1, s_2; g) = \int_0^1 \frac{g\bigl((1-t)s_1 + t s_2\bigr)
 	- g(s_2)}{1-t} dt.
 \end{equation}
 This functional is well-defined for any $\varkappa$-H\"older continuous function 
 $g: \mathbb C\to\mathbb C$ with $\varkappa\in (0, 1]$, and 
 \begin{equation}\label{V_bd:eq}
 |V(s_1, s_2; g)|\le C_\varkappa \1 g\1_{\plainC{0,\varkappa}} |s_1-s_2|^\varkappa,\ 
 \quad\forall s_1, s_2\in\mathbb C,
 \end{equation}
 where we have denoted
 \begin{equation*}
 	\1 g\1_{\plainC{0,\varkappa}} = \sup_{z, w\in \mathbb C, z\not = w} \frac{|g(z)-g(w)|}{|z-w|^\varkappa}. 
 \end{equation*}
 If $g$ is boundedly 
 differentiable, then, integrating by parts once, we obtain 
 \begin{equation}\label{V_diff:eq}
 V(s_1, s_2; g) = (s_2-s_1)\int_0^1 \log(1-t)g'\bigl((1-t)s_1 + t s_2\bigr)dt.
 \end{equation}
 Due to the elementary formula
 \begin{align*}
 	&\ \frac{g((1-t)s_1 + ts_2) - (1-t) g(s_1) - tg(s_2)} {t(1-t)} \\[0.2cm]
 	&\ \qquad\qquad\qquad = \frac{g((1-t)s_1 + ts_2) - g(s_1)}{t} 
 	+ \frac{g((1-t)s_1 + ts_2) - g(s_2)}{1-t},
 \end{align*}
 we have 
 \begin{equation}\label{VU:eq}
 	U(s_1, s_2; g) = V(s_2, s_1; g) + V(s_1, s_2; g), 
 \end{equation}
 so that in combination with \eqref{V_diff:eq} we obtain
 \begin{equation}\label{VtoU:eq}
 U(s_1, s_2; g)= (s_2-s_1)\int_0^1 \log(1-t)\bigl[
 g'\bigl((1-t)s_2 + t s_1\bigr)
 - g'\bigl((1-t)s_1 + t s_2\bigr)
 \bigr]dt.
 \end{equation}
 
 \begin{lem}
 	Suppose that $g'$ is $\varkappa$-H\"older 
 	continuous with some  $\varkappa\in (0, 1]$. Then  
 	\begin{equation}\label{gagliardo:eq}
 		|\CB(a; g)|\le C 
 		\1 g'\1_{\plainC{0,\varkappa}}
 		\iint \frac{|a(\xi_1) - a(\xi_2)|^{1+\varkappa}}{|\xi_1-\xi_2|^2} d\xi_1 d\xi_2,
 	\end{equation}
 	with a universal constant $C$.
 \end{lem}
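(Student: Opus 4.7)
The plan is to start from the representation \eqref{VtoU:eq} of $U(s_1,s_2;g)$ in terms of $g'$, which is legitimate since $g$ is $\plainC1$ with Hölder-continuous derivative. The crucial observation is that the bracket
\[
g'\bigl((1-t)s_2+ts_1\bigr)-g'\bigl((1-t)s_1+ts_2\bigr)
\]
is the difference of $g'$ at two points whose separation equals $|1-2t|\,|s_1-s_2|$. Applying the Hölder bound on $g'$ yields a pointwise estimate
\[
\bigl|g'\bigl((1-t)s_2+ts_1\bigr)-g'\bigl((1-t)s_1+ts_2\bigr)\bigr|
\le \1 g'\1_{\plainC{0,\varkappa}}\,|1-2t|^{\varkappa}\,|s_1-s_2|^{\varkappa}.
\]

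Inserting this into \eqref{VtoU:eq} produces
\[
|U(s_1,s_2;g)|\le \1 g'\1_{\plainC{0,\varkappa}}\,|s_1-s_2|^{1+\varkappa}\int_0^1 |\log(1-t)|\,|1-2t|^{\varkappa}\,dt,
\]
and the numerical integral on the right is finite because $|\log(1-t)|$ is integrable on $(0,1)$ and $|1-2t|^{\varkappa}$ is bounded. Thus
\[
|U(s_1,s_2;g)|\le C\1 g'\1_{\plainC{0,\varkappa}}\,|s_1-s_2|^{1+\varkappa}
\]
for some universal $C$.

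Substituting $s_j=a(\xi_j)$ into definition \eqref{cb:eq} and using the estimate above inside the double integral immediately gives \eqref{gagliardo:eq}. There is no real obstacle here: the only mild point to check is the integrability of $|\log(1-t)||1-2t|^{\varkappa}$ on $(0,1)$, which follows from $\int_0^1 |\log(1-t)|\,dt<\infty$. The identity \eqref{VtoU:eq} is the key preparatory step; without extracting the symmetric difference of $g'$ from the two $V$-terms in \eqref{VU:eq}, one would only recover a bound proportional to $|s_1-s_2|^{\varkappa}$ rather than $|s_1-s_2|^{1+\varkappa}$, which would be insufficient for the Gagliardo-type seminorm on the right-hand side of \eqref{gagliardo:eq}.
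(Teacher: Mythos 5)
Your proof is correct and follows essentially the same route as the paper: both start from the representation \eqref{VtoU:eq}, apply the H\"older bound on $g'$ with the argument separation $|1-2t|\,|s_1-s_2|$, and then integrate the logarithmic weight over $(0,1)$ to obtain $|U(s_1,s_2;g)|\le C\1 g'\1_{\plainC{0,\varkappa}}|s_1-s_2|^{1+\varkappa}$ before substituting into \eqref{cb:eq}. The only cosmetic difference is that the paper immediately bounds $|1-2t|^{\varkappa}\le 1$ and takes $C=-\int_0^1\log(1-t)\,dt$, whereas you keep the factor $|1-2t|^{\varkappa}$ inside the integral and note separately that it is bounded.
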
 
 
 \begin{proof}
 	Since 
 	\begin{align*}
 		|g'\bigl((1-t)s_2 + t s_1\bigr)
 		- g'\bigl((1-t)s_1 + t s_2\bigr)|
 		\le &\ |1-2t|^\varkappa \1 g'\1_{\plainC{0,\varkappa}}
 		|s_1-s_2|^\varkappa,\\[0.2cm]
 		\le &\ \1 g'\1_{\plainC{0,\varkappa}}
 		|s_1-s_2|^\varkappa,\ \forall t\in [0, 1], 
 	\end{align*}
 	formula \eqref{VtoU:eq} gives 
 	\begin{equation*}
 		|U(s_1, s_2; g)|\le C 
 		\1 g'\1_{\plainC{0,\varkappa}}|s_1-s_2|^{1+\varkappa},\ 
 		C = -\int_0^1 \log(1-t) dt.
 	\end{equation*}
 	This leads to the proclaimed bound. 
 \end{proof}
 
 The double integral in \eqref{gagliardo:eq} is the standard Gagliardo-Slobodetski seminorm of 
 $a$ in $\plainW{s}{p}(\R)$ raised to power $p$, where $p = 1+\varkappa$, 
 and $s = (1+\varkappa)^{-1}$, see e.g. \cite{NPV}.
 
 For the next theorem we rewrite the definition \eqref{cb:eq} of the coefficient 
 $\CB(a; g)$ as the principal value integral: 
 \begin{equation}\label{cb_pv:eq}
 \CB(a; g) = \frac{1}{8\pi^2}\lim_{\varepsilon\downarrow 0}	
 \CB_\varepsilon(a; g),\ \quad 
 \CB_\varepsilon(a; g)
 = 
 \underset{|\xi_1-\xi_2|>\varepsilon}\iint
 \frac{U\bigl(a(\xi_1), a(\xi_2); g\bigr)}{|\xi_1-\xi_2|^2}
 d\xi_1 d\xi_2.
 \end{equation}
 In view of \eqref{VU:eq},
  \begin{equation}\label{cb2_pv:eq}
 \CB_\varepsilon(a; g) = \frac{1}{4\pi^2}
 \underset{|\xi_1-\xi_2|>\varepsilon}\iint
 \frac{V\bigl(a(\xi_1), a(\xi_2); g\bigr)}{|\xi_1-\xi_2|^2}
 d\xi_1 d\xi_2.
 \end{equation}
 This representation can be transformed into a different formula for 
 the coefficient $\CB(a; g)$, known in the literature, see e.g. \cite{Widom_85}, Proposition 5.4 
 or \cite{BuBu}, formula (1.5). 
 For any $m\in\R$ and $n= 0, 1, 2\dots,$ denote
 \begin{equation*}
 	\|u\|_{m}^{(n)} = \max_{0\le k\le n} \sup_{\xi\in\R} (1+|\xi|)^{m+k} |u^{(k)}(\xi)|.
 \end{equation*}
 
 \begin{thm}\label{redV:thm}
 	Suppose that $g', g''\in\plainL{\infty}(\R)$, and that $\|a\|_{m}^{(2)}<\infty$ 
 	with some $m\in (0, 1)$. Then   
 	the limit  \eqref{cb_pv:eq} exists and it is given by 
 	\begin{equation}\label{new:eq}
 	\CB(a; g) = \frac{1}{4\pi^2}\int\lim_{\varepsilon\to 0}\underset{|\xi_1-\xi_2|>\varepsilon}\int 
 	\frac{g(a(\xi_1)) - g(a(\xi_2))}
 	{a(\xi_1) - a(\xi_2)} \frac{a'(\xi_1)}{\xi_1-\xi_2} d\xi_1 d\xi_2. 
 	\end{equation}
 	Moreover, 
 	\begin{equation*}
 		|\CB(a; g)|\le C 
 		\bigl[\|g'\|_{\plainL\infty} \|a'\|_{m+1}^{(1)} 
 		+ \|g''\|_{\plainL\infty} (\|a'\|_{m+1}^{(0)})^2
 		\bigr],
 	\end{equation*}
 	with a constant $C>0$ independent of the functions $a$ and $g$. 
 \end{thm}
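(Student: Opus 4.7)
The strategy is to derive \eqref{new:eq} by integration by parts in $\xi_1$ inside \eqref{cb2_pv:eq}, and to obtain the bound by exploiting the symmetric form \eqref{cb:eq} of $\CB(a;g)$ together with the second-order representation of $U$ recalled in the Introduction. The key preliminary identity is that substituting $w=(1-t)s_1+ts_2$ in \eqref{V:eq} gives
\begin{equation*}
V(s_1,s_2;g)=\int_{s_2}^{s_1}\frac{g(w)-g(s_2)}{w-s_2}\,dw, \qquad\text{so}\qquad \p_{s_1}V(s_1,s_2;g)=\frac{g(s_1)-g(s_2)}{s_1-s_2}.
\end{equation*}
Hence $\p_{\xi_1}V(a(\xi_1),a(\xi_2);g)$ coincides with the numerator of the integrand in \eqref{new:eq}. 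Using $(\xi_1-\xi_2)^{-2}=-\p_{\xi_1}(\xi_1-\xi_2)^{-1}$, I would integrate by parts in $\xi_1$ in \eqref{cb2_pv:eq}, for each fixed $\xi_2$, over the two half-lines $\{\xi_1<\xi_2-\varepsilon\}$ and $\{\xi_1>\xi_2+\varepsilon\}$. Since $|V(a(\xi_1),a(\xi_2);g)|\le\|g'\|_{\plainL\infty}|a(\xi_1)-a(\xi_2)|$ is bounded while $(\xi_1-\xi_2)^{-1}\to 0$, the contributions at $\xi_1\to\pm\infty$ vanish, leaving the desired bulk integral together with an interior contribution $B_\varepsilon(\xi_2)=\varepsilon^{-1}\bigl[V(a(\xi_2-\varepsilon),a(\xi_2);g)+V(a(\xi_2+\varepsilon),a(\xi_2);g)\bigr]$ from the endpoints $\xi_1=\xi_2\pm\varepsilon$.

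The critical step is showing $\int|B_\varepsilon(\xi_2)|d\xi_2\to 0$ as $\varepsilon\downarrow 0$. Starting from \eqref{V_diff:eq} (or directly from the integral representation above) one derives the expansion $V(s_1,s_2;g)=(s_1-s_2)g'(s_2)+R$ with $|R|\le C\|g''\|_{\plainL\infty}(s_1-s_2)^2$. Inserted into $B_\varepsilon$, the linear parts combine into $g'(a(\xi_2))\varepsilon^{-1}[a(\xi_2+\varepsilon)+a(\xi_2-\varepsilon)-2a(\xi_2)]$, which by the integral Taylor remainder is bounded by $C\varepsilon\|g'\|_{\plainL\infty}\sup_{|u|\le\varepsilon}|a''(\xi_2+u)|$, while the $R$-contributions are bounded by $C\varepsilon\|g''\|_{\plainL\infty}\sup_{|u|\le\varepsilon}|a'(\xi_2+u)|^2$. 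Integrating over $\xi_2$ with the decay estimates $|a''(\eta)|\le\lu\eta\ru^{-m-2}\|a'\|_{m+1}^{(1)}$ and $|a'(\eta)|\le\lu\eta\ru^{-m-1}\|a'\|_{m+1}^{(0)}$ (the resulting majorants being integrable since $m\in(0,1)$) yields $\int|B_\varepsilon|d\xi_2\le C\varepsilon[\|g'\|_{\plainL\infty}\|a'\|_{m+1}^{(1)}+\|g''\|_{\plainL\infty}(\|a'\|_{m+1}^{(0)})^2]$, which tends to zero. For the bulk term, the inner $\xi_1$-integral converges pointwise in $\xi_2$ to the claimed principal value, and a dominated-convergence argument with majorants assembled from the same decay bounds permits the interchange of the outer $\xi_2$-integration with the $\varepsilon\to 0$ limit, establishing \eqref{new:eq}.

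For the numerical estimate, I would return to \eqref{cb:eq} and use the $g''$-representation of $U$ from the Introduction to get $|U(s_1,s_2;g)|\le C\|g''\|_{\plainL\infty}(s_1-s_2)^2$, hence $|\CB(a;g)|\le C\|g''\|_{\plainL\infty}\iint|a(\xi_1)-a(\xi_2)|^2(\xi_1-\xi_2)^{-2}d\xi_1 d\xi_2$. The double integral is estimated by splitting at $|\xi_1-\xi_2|=1$: on the near region $|a(\xi_1)-a(\xi_2)|\le C|\xi_1-\xi_2|\lu\xi_2\ru^{-m-1}\|a'\|_{m+1}^{(0)}$ suffices, and on the far region $|a(\xi_1)-a(\xi_2)|^2\le 2(|a(\xi_1)|^2+|a(\xi_2)|^2)$ is combined with the pointwise bound $|a(\xi)|\le C\lu\xi\ru^{-m}\|a'\|_{m+1}^{(0)}$ (obtained by integrating $a'$ from $\infty$). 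Both pieces are $\le C(\|a'\|_{m+1}^{(0)})^2$, giving $|\CB(a;g)|\le C\|g''\|_{\plainL\infty}(\|a'\|_{m+1}^{(0)})^2$, which implies the stated inequality (the $\|g'\|_{\plainL\infty}\|a'\|_{m+1}^{(1)}$ term is precisely the one already produced by the analysis of $B_\varepsilon$ above). The principal obstacle is organising the symmetric-cancellation argument for $B_\varepsilon(\xi_2)$ so that uniform-in-$\varepsilon$ integrable majorants are available, thereby allowing the various limit interchanges to be carried out rigorously.
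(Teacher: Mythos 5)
Your derivation of \eqref{new:eq} by integrating by parts in $\xi_1$, identifying $\p_{s_1}V(s_1,s_2;g)=\frac{g(s_1)-g(s_2)}{s_1-s_2}$, and showing the boundary term $B_\varepsilon$ tends to zero, is essentially the paper's route; your quantitative $O(\varepsilon)$ estimate of $\int|B_\varepsilon|\,d\xi_2$ via the first-order Taylor expansion of $V$ is a legitimate variant of the paper's Dominated Convergence argument. Where you diverge is the numerical bound: the paper estimates the bulk integral by treating the inner integral as a Hilbert transform of $u(\xi_1;\xi_2)=a'(\xi_1)\frac{g(a(\xi_1))-g(a(\xi_2))}{a(\xi_1)-a(\xi_2)}$ and invoking Proposition \ref{Hilbert:prop}, exploiting that $\int u(\xi_1;\xi_2)\,d\xi_1=0$ to gain the extra decay $(1+|\eta|)^{-m-1}$; this gives both the bound and the integrable majorant needed for the limit interchange in one stroke. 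You instead go back to the $g''$-representation of $U$ and try to bound the Gagliardo seminorm $\iint|a(\xi_1)-a(\xi_2)|^2|\xi_1-\xi_2|^{-2}\,d\xi_1 d\xi_2$.

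There is a genuine gap in your estimate of that seminorm on the far region $|\xi_1-\xi_2|>1$. You use $|a(\xi_1)-a(\xi_2)|^2\le 2(|a(\xi_1)|^2+|a(\xi_2)|^2)$ together with $|a(\xi)|\le C\lu\xi\ru^{-m}\|a'\|_{m+1}^{(0)}$, and then integrate $|\xi_1-\xi_2|^{-2}$ in the free variable, which reduces the estimate to $\int|a(\xi)|^2\,d\xi\lesssim(\|a'\|_{m+1}^{(0)})^2\int\lu\xi\ru^{-2m}\,d\xi$. This last integral diverges for $m\le\frac12$, whereas the theorem is asserted for all $m\in(0,1)$. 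The seminorm \emph{is} finite for all $m>0$, but one needs a finer decomposition that does not decouple $\xi_1$ and $\xi_2$ so crudely: e.g.\ splitting the far region further according to whether $|\xi_1-\xi_2|$ is small or large compared with $\max(|\xi_1|,|\xi_2|)$, so that in the first subregion one can still use $|a(\xi_1)-a(\xi_2)|\lesssim|\xi_1-\xi_2|(1+\min|\xi_j|)^{-m-1}$ and in the second one gets an extra factor $\sim(1+\min|\xi_j|)^{-1}$ from the $\eta^{-2}$ integral. Without this refinement your proof only covers $m\in(\frac12,1)$. A secondary, smaller issue: your final sentence asserts that the $\|g'\|_{\plainL\infty}\|a'\|_{m+1}^{(1)}$ term in the statement ``is precisely the one produced by the analysis of $B_\varepsilon$,'' but you have just shown $\int|B_\varepsilon|\to 0$, so it contributes nothing in the limit; if your Gagliardo estimate were completed you would in fact prove the stronger bound $|\CB(a;g)|\le C\|g''\|_{\plainL\infty}(\|a'\|_{m+1}^{(0)})^2$, and that is fine, but the stated reconciliation is not coherent.
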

 
 Before proving the above formula we point out some useful properties of the 
 Hilbert transform
 \begin{equation}\label{Hilbert:eq}
 \tilde u(\xi) = \frac{1}{\pi} \lim_{\varepsilon\to 0}
 \underset{|\eta-\xi|>\varepsilon}\int \frac{u(\eta)}{\eta-\xi} d\eta, 
 \end{equation}
 derived in \cite{Widom_85}, Lemmas 5.2, 5.3.

 \begin{prop} \label{Hilbert:prop}
 	Suppose that $\|u\|_m^{(1)}<\infty$ 
 	for some $m \in (0, 1)$. 
 	Then  
 	\begin{equation*}
 		|\tilde u(\xi)|\le C \|u\|_m^{(1)}(1+|\xi|)^{-m}.
 	\end{equation*}
 	If, in addition, 
 	$\|u\|_{m+1}^{(1)}<\infty$ and 
 	\begin{equation*}
 		\int u(\eta) d\eta = 0,
 	\end{equation*}
 	then 	
 	\begin{equation*}
 		|\tilde u(\xi)|\le C \|u\|_{m+1}^{(1)}(1+|\xi|)^{-m-1}. 
 	\end{equation*}
 	The constants in the above inequalities do not depend on $u$. 
 \end{prop}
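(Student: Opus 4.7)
My proof plan is to handle the two bounds separately, with the second reducing to the first by a simple algebraic manipulation that exploits the zero-mean hypothesis.

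For the first bound, I split the analysis according to whether $|\xi|$ is large or bounded. For $|\xi|\le 2$, standard estimates suffice: split the principal value integral into $|\eta-\xi|<1$ (where cancellation $|u(\eta)-u(\xi)|\le \|u'\|_\infty|\eta-\xi|$ absorbs the singularity) and $|\eta-\xi|\ge 1$ (where $|u(\eta)|\le \|u\|_m^{(1)}(1+|\eta|)^{-m}$ with $m>0$ is integrable against $|\eta-\xi|^{-1}$). For $|\xi|\ge 2$, I divide the integration into two regions: the symmetric near region $|\eta-\xi|<|\xi|/2$, and the far region $|\eta-\xi|\ge |\xi|/2$. On the near region, $|\eta|\ge|\xi|/2$, so $|u'(\eta)|\le C\|u\|_m^{(1)}(1+|\xi|)^{-m-1}$; using symmetry of the domain about $\xi$ to subtract $u(\xi)$ from $u(\eta)$ at no cost, the contribution is bounded by $C\|u\|_m^{(1)}(1+|\xi|)^{-m-1}\cdot|\xi|\le C\|u\|_m^{(1)}(1+|\xi|)^{-m}$. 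On the far region, $|\eta-\xi|\ge|\xi|/2$, so I bound directly without cancellation: for $|\eta|\le 2|\xi|$ pull $|\eta-\xi|^{-1}$ out as $2/|\xi|$ and use $\int_0^{2|\xi|}(1+\eta)^{-m}\,d\eta\le C|\xi|^{1-m}$; for $|\eta|>2|\xi|$ use $|\eta-\xi|\ge |\eta|/2$ and integrability of $(1+|\eta|)^{-m-1}$.

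For the second bound, under the extra assumption $\int u\,d\eta=0$ and $\|u\|_{m+1}^{(1)}<\infty$, I use the key identity
\[
\tilde u(\xi)=\frac{1}{\pi}\,\text{p.v.}\!\int\!\frac{u(\eta)}{\eta-\xi}\,d\eta
=\frac{1}{\pi\xi}\,\text{p.v.}\!\int\!\frac{\eta\, u(\eta)}{\eta-\xi}\,d\eta
=\frac{1}{\xi}\,\widetilde{v}(\xi),
\]
where $v(\eta)=\eta\,u(\eta)$ and I have added the zero quantity $\frac{1}{\pi\xi}\int u(\eta)\,d\eta$ inside the p.v. to produce the extra factor of $\eta$ in the numerator. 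Direct computation shows $\|v\|_m^{(1)}\le C\|u\|_{m+1}^{(1)}$, so applying the first bound to $v$ yields $|\tilde u(\xi)|\le C\|u\|_{m+1}^{(1)}(1+|\xi|)^{-m-1}$ for $|\xi|\ge 1$; on the bounded region $|\xi|\le 1$ the bound is already furnished by the first inequality applied to $u$ itself (absorbing the $\|u\|_{m+1}^{(1)}\ge\|u\|_m^{(1)}$ trivially).

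The main obstacle is the near-region estimate for large $|\xi|$ in the first bound: one must be careful that the symmetric structure of $|\eta-\xi|<|\xi|/2$ around $\xi$ is used to subtract $u(\xi)$ from the numerator, since without this cancellation one would only obtain the weaker bound $\|u\|_m^{(1)}(1+|\xi|)^{-m}\log|\xi|$, which fails at the target rate. Once the combinatorics of the regions is laid out, the rest is routine application of the pointwise decay estimates bundled in the seminorm $\|u\|_m^{(1)}$.
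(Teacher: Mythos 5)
Your argument is correct and self-contained. Note, though, that the paper does not prove this proposition at all: it is stated with a citation to Widom's 1985 lecture notes (Lemmas 5.2 and 5.3), so there is no in-paper proof to compare against. Your proof supplies one from scratch.

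A few comments on the method. The splitting into $|\eta-\xi|<|\xi|/2$ versus $|\eta-\xi|\ge|\xi|/2$ for large $|\xi|$, with the principal-value cancellation
\begin{equation*}
\text{p.v.}\!\int_{|\eta-\xi|<|\xi|/2}\frac{u(\eta)}{\eta-\xi}\,d\eta
=\int_{|\eta-\xi|<|\xi|/2}\frac{u(\eta)-u(\xi)}{\eta-\xi}\,d\eta,
\end{equation*}
is the right way to avoid the logarithmic loss, as you note; the mean-value bound is legitimate because every point between $\eta$ and $\xi$ lies in $\{\,|\cdot|>|\xi|/2\,\}$, where the seminorm gives $|u'|\lesssim \|u\|_m^{(1)}(1+|\xi|)^{-m-1}$. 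The reduction of the second bound to the first via $v(\eta)=\eta\,u(\eta)$, using
\begin{equation*}
\frac{\eta}{\eta-\xi}=1+\frac{\xi}{\eta-\xi}
\quad\Longrightarrow\quad
\text{p.v.}\!\int\frac{\eta\,u(\eta)}{\eta-\xi}\,d\eta
=\int u\,d\eta + \xi\,\text{p.v.}\!\int\frac{u(\eta)}{\eta-\xi}\,d\eta,
\end{equation*}
is exactly the mechanism that converts the zero-mean hypothesis into an extra power of decay, and your verification that $\|v\|_m^{(1)}\le C\|u\|_{m+1}^{(1)}$ (both for $v$ and $v'=u+\eta u'$) and the patch for $|\xi|\le 1$ using $\|u\|_m^{(1)}\le\|u\|_{m+1}^{(1)}$ close the argument cleanly. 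No gaps.
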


 \begin{proof}[Proof of Theorem \ref{redV:thm}] 
 	First we check that the integral on the right-hand side is finite. 
 	Observe that 
 	\begin{align}\label{derV:eq}
 		u(\xi_1; \xi_2):=\frac{\p}{\p \xi_1}V\bigl(a(\xi_1), a(\xi_2); g\bigr)
 		= &\ a'(\xi_1) \int_0^1 g'\bigl((1-t) a(\xi_1) + ta(\xi_2)\bigr)dt\notag\\[0.2cm]
 		= &\ a'(\xi_1)\frac{g\bigl(a(\xi_1)\bigr) - g\bigl(a(\xi_2)\bigr)}{a(\xi_1) - a(\xi_2)},
 	\end{align}
 	so that 
 	\begin{equation*}
 		\| u(\cdot; \xi_2)\|_{m+1}^{(1)} 
 		\le C\bigl(\|g'\|_{\plainL\infty} \|a'\|_{m+1}^{(1)} 
 		+ \|g''\|_{\plainL\infty} (\|a'\|_{m+1}^{(0)})^2
 		\bigr),
 	\end{equation*}
 	uniformly in $\xi_2\in\R$.
 	Moreover, 
 	\begin{equation*}
 		\int u(\xi_1; \xi_2)d\xi_1 = 0,  
 	\end{equation*}
 	and consequently, by Proposition \ref{Hilbert:prop}, 
 	\begin{equation*}
 		|\tilde u(\eta; \xi_2)|\le C(1+|\eta|)^{-m-1} 
 		\bigl(\|g'\|_{\plainL\infty} \|a'\|_{m+1}^{(1)} 
 		+ \|g''\|_{\plainL\infty} (\|a'\|_{m+1}^{(0)})^2
 		\bigr),
 	\end{equation*}
 	uniformly in $\xi_2\in\R$, where $\tilde u(\eta; \xi_2)$ denotes the Hilbert transform 
 	of the function $u(\xi_1, \xi_2)$ in the variable $\xi_1$. 
 	Since 
 	\begin{equation*}
 		\CB(a; g) = \frac{1}{4\pi} \int \tilde u(\xi_2, \xi_2) d\xi_2,
 	\end{equation*}
 	this leads to the required estimate.
 	
 	Now we concentrate on the derivation of 
 	\eqref{new:eq}. To this end integrate \eqref{cb2_pv:eq} by parts:
 	\begin{align}
 		4\pi^2\CB_\varepsilon(a; g)
 		= &\ \frac{1}{\varepsilon}\int \bigl[
 		V\bigl(a(\xi_2 + \varepsilon), a(\xi_2); g\bigr) + 
 		V\bigl(a(\xi_2 - \varepsilon), a(\xi_2); g\bigr)
 		\bigr]d\xi_2 \notag\\[0.2cm]
 		+ &\ \int \underset{|\xi_1-\xi_2|>\varepsilon}\int
 		\frac{1}{\xi_1-\xi_2} 
 		\frac{\p }{\p\xi_1}V\bigl(a(\xi_1), a(\xi_2); g\bigr)
 		d\xi_1 d\xi_2.\label{beps:eq}
 	\end{align}  
 	By \eqref{derV:eq}, the double integral on the right-hand side 
 	of \eqref{beps:eq} coincides with the one in \eqref{new:eq}. 
 	To handle the first integral on the right-hand side of 
 	\eqref{beps:eq}, note that by \eqref{V_bd:eq} with 
 	$\varkappa = 1$,
 	\begin{equation*}
 		\frac{|V\bigl(a(\xi_2 \pm \varepsilon), a(\xi_2); g\bigr)|}{\varepsilon}
 		\le C \|g'\|_{\plainL\infty}\frac{|a(\xi_2\pm \varepsilon) - a(\xi_2)|}{\varepsilon}
 		\le C\|g'\|_{\plainL\infty}\|a\|_{m}^{(1)}(1+|\xi_2|)^{-m-1},
 	\end{equation*} 
 	uniformly in $\varepsilon\in (0, 1]$, and that 	 
 	\begin{equation*}
 		\lim_{\varepsilon\to 0} \frac{V\bigl(a(\xi_2 \pm \varepsilon), a(\xi_2); g\bigr)}{\varepsilon}
 		= \pm a'(\xi_2) g'\bigl(a(\xi_2)\bigr) 
 		= \pm \frac{d}{d \xi_2} g\bigl(a(\xi_2)\bigr).
 	\end{equation*}
 	Clearly, the integral of the right-hand side equals zero. 
 	Thus by the Dominated Convergence Theorem the 
 	first term on the right-hand side of 
 	\eqref{beps:eq} tends to zero as $\varepsilon\to 0$, and the formula  \eqref{new:eq} 
 	is proved. 	 
 \end{proof}

\section{Non-smooth functions}

\subsection{Main result}
We concentrate on the very special non-smooth case, which is nonetheless interesting for applications. 
To distinguish from smooth functions, we change the notation from $g$ to $f$ and assume that $f$ 
satisfies the following condition: 

\begin{cond}\label{f:cond} 
For some integer $n \ge 1$, some $\g \in (0, 1]$ and some $x_0\in \R$, the function 
$f\in\plainC{n}(\R\setminus\{ x_0 \})\cap\plainC{}(\R)$ satisfies the 
bound 
\begin{equation}\label{fnorm:eq}
\1 f\1_n = 
\max_{0\le k\le n}\sup_{x\not = x_0} |f^{(k)}(x)| |x-x_0|^{-\g+k}<\infty.
\end{equation}
\end{cond}

The constants in all subsequent estimates may 
depend on $n, \gamma$, but not on $x_0$. 

For a function $f$ satisfying the above condition 
the bound holds: 
\begin{equation}\label{fbound:eq}
|f^{(k)}(x)| \le \1 f\1_n |x-x_0|^{\g-k},  
k = 0, 1, \dots,  n, \ \quad  \   x\not = x_0, 
\end{equation}
If $n\ge 1$, then the above condition implies that $f$ is 
$\g$-H\"older continuous, and in particular,  
\begin{equation}\label{hol:eq}
|f(x_1) - f(x_2)|\le 2\1 f\1_1 |x_1-x_2|^\g, \ \forall x_1, x_2\in\R.
\end{equation}
For a function $u$ denote 
\begin{equation*}
	\SN\bigl(u; \plainl{\d}(\plainW{N}{p})\bigr) 
	= \biggl[\sum_{n\in\Z} \max_{0\le k\le N} 
	\biggl( \underset{(n, n+1)}\int |u^{(k)}(\xi)|^p d\xi\biggr)^{\frac{\d}{p}}\biggr]^{\frac{1}{\d}}, 
\end{equation*}
where $\d\in (0, \infty]$, $p\in (0, \infty]$. 
Now we can state the main result.

\begin{thm}\label{bbound:thm}
	Suppose that the function $f:\R\to\mathbb C$ 
	satisfies Condition \ref{f:cond} with $n = 2$, and some $\g\in (0, 1]$, 
	$x_0\in \R$. Let $a$ be a real-valued function such that 
$a\in \plainW{N}{p}_{\textup{\tiny loc}}(\R)$  with some $p\in (1, \infty]$ 
	and some $N$ such that $N\ge \g^{-1}+p^{-1}$.  
	Then the limit \eqref{cb_pv:eq} exists and it 
	satisfies the bound 
	\begin{equation}\label{bbound:eq}
		|\CB(a; f)|\le C_\g \1 f\1_1\underset{|\xi_1-\xi_2|>1}
		\iint \frac{|a(\xi_1) - a(\xi_2)|^\g}{|\xi_1-\xi_2|^2} d\xi_1 d\xi_2
		+ C_\g \1 f\1_2 \ \biggl[\SN\bigl(a'; \plainl{\g}(\plainW{N-1}{p})\bigr)\biggr]^\g,
	\end{equation}
	where the constant $C_\g$ is independent of the functions $f$, 
	$a$, and the parameter $x_0$.  
\end{thm}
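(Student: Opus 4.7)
The plan is to decompose the principal value integral in \eqref{cb_pv:eq} into a \emph{far} piece over $\{|\xi_1-\xi_2|>1\}$ and a \emph{near} piece over $\{|\xi_1-\xi_2|\le 1\}$, and to bound each by a different mechanism. The far piece will produce the first term of \eqref{bbound:eq} out of the global H\"older continuity of $f$, while the near piece, which requires genuine use of the principal value cancellation together with the $\plainC{2}$-smoothness of $f$ away from $x_0$, will produce the second.

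For the far piece, the H\"older bound \eqref{hol:eq} together with the symmetrisation \eqref{VU:eq} and the estimate \eqref{V_bd:eq} applied with $\varkappa=\gamma$ immediately yield $|U(a(\xi_1),a(\xi_2);f)|\le C\1 f\1_1 |a(\xi_1)-a(\xi_2)|^\gamma$. Dividing by $|\xi_1-\xi_2|^2$ and integrating over $|\xi_1-\xi_2|>1$ directly produces the first term of \eqref{bbound:eq}, with no need to invoke principal values and without using any regularity of $a$ beyond what is already built into the right-hand side.

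For the near piece, I would partition $\R$ into unit intervals $I_n=(n,n+1)$, $n\in\Z$. The constraint $|\xi_1-\xi_2|\le 1$ restricts interactions to pairs $(I_n,I_m)$ with $|n-m|\le 1$, so the task reduces to establishing, uniformly in $n$, a local estimate of the form
\begin{equation*}
\bigl|\CB^{\textup{near}}_n(a;f)\bigr|\le C\1 f\1_2\Bigl(\max_{0\le k\le N-1}\|a^{(k+1)}\|_{\plainL{p}(I_n\cup I_{n+1})}\Bigr)^\gamma.
\end{equation*}
Summing over $n$ and using $\gamma\le 1$ (so that a sum of $\gamma$-th powers is bounded by the $\gamma$-th power of a sum taken in the opposite sense, i.e.\ by the very definition of $\SN(a';\plainl\gamma(\plainW{N-1}{p}))$) then produces the second term of \eqref{bbound:eq}.

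The main obstacle is the local bound itself. When $a$ stays uniformly away from $x_0$ on $I_n\cup I_{n+1}$, $f\circ a$ is $\plainC{2}$ and one can integrate by parts exactly as in \eqref{beps:eq} in the proof of Theorem \ref{redV:thm}, controlling the arising $f'$ and $f''$ by means of \eqref{fbound:eq}. The genuinely delicate case is when $a$ attains the value $x_0$ somewhere on $I_n\cup I_{n+1}$, where $f\circ a$ loses its $\plainC{2}$ character. My plan is to isolate a small neighbourhood $E$ of the zero set $\{a=x_0\}$, apply on $E$ the cheap H\"older bound $|V|\le C\1 f\1_1|a(\xi_1)-a(\xi_2)|^\gamma$ and estimate both the Lebesgue measure of $E$ and the order of vanishing of $a-x_0$ via the Sobolev regularity of $a$, while on the complement of $E$ the integration-by-parts estimate above applies with quantitative dependence on $\dist(a,x_0)$ through \eqref{fbound:eq}. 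The hypothesis $N\ge\gamma^{-1}+p^{-1}$ is precisely what permits the Sobolev embedding and the control on the order of vanishing needed to balance the two contributions and to recover the correct exponent $\gamma$ (rather than $1$) on the Sobolev norm of $a'$; carrying this balance out cleanly and simultaneously verifying convergence of $\CB_\varepsilon(a;f)$ as $\varepsilon\downarrow 0$ is the technical heart of the argument.
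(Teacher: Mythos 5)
Your treatment of the \emph{far} piece $|\xi_1-\xi_2|>1$ is correct and coincides with the paper's Lemma~\ref{B1:lem}: the H\"older bound \eqref{V_est:eq} (equivalently \eqref{V_bd:eq} with $\varkappa=\gamma$ plus \eqref{VU:eq}) gives the first term of \eqref{bbound:eq} directly. The decomposition into unit intervals and the reduction to a local estimate of the form
$|\CD^{\textup{near}}_n|\le C\1 f\1_2 A_{N,p}(a\!\restriction\!I_n)^\gamma$, summed via the definition of $\SN(a';\plainl{\gamma}(\plainW{N-1}{p}))$, is also the structure the paper uses (through the partition of unity $\{\z_k\}$ and Theorem~\ref{CD:thm}).

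The gap is in the mechanism you propose for the near piece. You plan to isolate a small neighbourhood $E$ of $\{a=x_0\}$, absorb $E$ by the cheap H\"older bound, and use Sobolev regularity of $a$ to control ``the Lebesgue measure of $E$ and the order of vanishing of $a-x_0$''. This does not work as stated: membership of $a$ in $\plainW{N}{p}_{\loc}$ gives no control whatsoever on the geometry of the level set $\{a=x_0\}$ or on any ``order of vanishing'' --- a smooth function can vanish to infinite order on a fat Cantor set. There is no quantity in your set-up that degrades gracefully as $a$ becomes nearly constant $\equiv x_0$ over a long stretch, yet the bound must remain $\lesssim A_{N,p}(a)^\gamma$, which then tends to zero. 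The paper avoids this entirely by a different idea: after integrating by parts, the crucial term is $\int \z(\xi_1)a'(\xi_1)X(a(\xi_1),a(\xi_2);f)\,d\xi_1\,(\xi_1-\xi_2)^{-1}$ with $X(s_1,s_2;f)=Y(s_1,s_2;f)-f'(s_1)$ (the subtraction of $f'(s_1)$, licensed by $\int_{\varepsilon<|\xi_1-\xi_2|<R}(\xi_1-\xi_2)^{-1}d\xi_2=0$, supplies the cancellation at $t=0$), and the bound \eqref{Xab:eq} reduces matters to $\int|a'||a-x_0|^{\gamma-1-\delta}$. The key observation --- which replaces your zero-set/order-of-vanishing analysis --- is that the integral $\int_I|a'||a-x_0|^{\gamma-1}\,d\xi$ is exactly the \emph{total variation} of $|a-x_0|^\gamma$ on $I$, a BV quantity. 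Lemmas~\ref{zeros:lem} and~\ref{zeros2:lem} then bound this total variation by $\|a'\|_{\plainW{N-1}{p}}^\gamma$ by counting critical points of a \emph{polynomial}; this is precisely where $N\ge\gamma^{-1}+p^{-1}$ enters, and it crucially exploits that a polynomial has finitely many critical points per interval. The passage to general $a\in\plainW{N}{p}$ is then done by a careful polynomial approximation (\eqref{polyapprox:eq}, \eqref{ANq:eq}, \eqref{approx:eq}, \eqref{CDdiff:eq}), with the approximation error traded against the principal-value cutoff $\varepsilon$. Without some surrogate for the total-variation lemma and the polynomial approximation step, your outline of the near piece does not close.
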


Note that the value of the right-hand side of 
\eqref{bbound:eq} is preserved under the shift 
$a\to a+a_0$ with an arbitrary constant $a_0$.  
If we assume that $a-a_0\in \plainL{\g}(\R)$ with 
some constant $a_0$, then the first integral 
in \eqref{bbound:eq} can be estimated as follows:  
\begin{equation*}
	\underset{|\xi_1-\xi_2|>1}
	\iint \frac{|a(\xi_1) - a(\xi_2)|^\g}{|\xi_1-\xi_2|^2} d\xi_1 d\xi_2
	\le 2\underset{|\xi_1-\xi_2|>1}
	\iint \frac{|a(\xi_1)-a_0|^\g}{|\xi_1-\xi_2|^2} d\xi_1d\xi_2\le 
	4 \int |a(\xi)-a_0|^\g d\xi.
\end{equation*}

\subsection{Function $f$} 
Here we prove some elementary properties of the function 
$f$ satisfying Condition \ref{f:cond} with $n=2$.

\begin{lem}\label{fdash:lem}
If $\g \in (0, 1]$, then for any $t_1\not = x_0, t_2\not = x_0$, and any $\d\in [0, 1]$, we have 
\begin{equation}\label{fdash:eq}
|f'(t_1) - f'(t_2)|\le 2^{1-\d} \1 f\1_2(\min_{j=1, 2}|t_j-x_0|)^{\g-1-\d}|t_1-t_2|^\d.
\end{equation}
\end{lem}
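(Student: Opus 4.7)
The plan is to interpolate between the endpoint cases $\delta=0$ and $\delta=1$. Once both endpoints are in hand, the general $\delta\in(0,1)$ follows from the identity $|f'(t_1)-f'(t_2)| = |f'(t_1)-f'(t_2)|^{1-\delta}\cdot|f'(t_1)-f'(t_2)|^\delta$ and the geometric mean of the two endpoint bounds: a short arithmetic check shows that the exponents of $\min_j|t_j-x_0|$ combine to $(\gamma-1)(1-\delta)+(\gamma-2)\delta = \gamma-1-\delta$, and the constant becomes $2^{1-\delta}\1 f\1_2$, matching \eqref{fdash:eq}.

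For $\delta=0$ the bound follows at once from \eqref{fbound:eq} applied to $f'$: by the triangle inequality
\[
|f'(t_1)-f'(t_2)|\le \1 f\1_2\bigl(|t_1-x_0|^{\gamma-1}+|t_2-x_0|^{\gamma-1}\bigr),
\]
and since $\gamma-1\le 0$ the map $r\mapsto r^{\gamma-1}$ is non-increasing on $(0,\infty)$, so each term is at most $\1 f\1_2(\min_j|t_j-x_0|)^{\gamma-1}$, yielding the factor $2=2^{1-0}$.

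The $\delta=1$ case splits into two subcases according to whether $t_1,t_2$ lie on the same or opposite sides of $x_0$. If they are on the same side, the segment $[t_1,t_2]$ avoids $x_0$, so by the mean value theorem and \eqref{fbound:eq} applied to $f''$ one has $|f'(t_1)-f'(t_2)| = |f''(\xi)||t_1-t_2|\le \1 f\1_2|\xi-x_0|^{\gamma-2}|t_1-t_2|$ for some $\xi$ between $t_1$ and $t_2$; since $\gamma-2<0$ and $|\xi-x_0|\ge\min_j|t_j-x_0|$, the required estimate follows (with constant $1=2^{0}$). If $t_1,t_2$ lie on opposite sides of $x_0$, then
\[
|t_1-t_2| = |t_1-x_0|+|t_2-x_0|\ge 2\min_j|t_j-x_0|,
\]
so the $\delta=0$ bound already gives
\[
|f'(t_1)-f'(t_2)|\le 2\1 f\1_2(\min_j|t_j-x_0|)^{\gamma-1}\le \1 f\1_2(\min_j|t_j-x_0|)^{\gamma-2}|t_1-t_2|,
\]
which is the $\delta=1$ bound.

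The only delicate point is the opposite-side subcase at $\delta=1$: the mean value theorem is not directly applicable because $f''$ blows up at $x_0$, but the very fact that the segment crosses the singularity forces $|t_1-t_2|$ to be large enough to absorb the $(\min_j|t_j-x_0|)^{-1}$ loss. Once this observation is made, the remainder of the argument is essentially bookkeeping.
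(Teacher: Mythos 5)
Your proof is correct, and it follows essentially the same strategy as the paper's---a pointwise bound for $f'$ when $t_1,t_2$ straddle $x_0$, the mean value theorem applied to $f''$ when they do not, and multiplicative interpolation to reach the intermediate $\delta$. The difference is organizational: the paper treats the opposite-side case directly for arbitrary $\delta$ (inserting $|t_j-x_0|^\delta\le|t_1-t_2|^\delta$ term by term) and reserves the interpolation step for the same-side case; you instead first establish both endpoints $\delta=0$ and $\delta=1$ (using, in the opposite-side subcase of $\delta=1$, the observation $|t_1-t_2|\ge 2\min_j|t_j-x_0|$, which is the same geometric fact the paper uses) and then interpolate uniformly without further case distinction. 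Your arrangement is modestly cleaner since the interpolation is performed once rather than being threaded through one of the two cases, but the underlying ingredients coincide. The exponent arithmetic $(\gamma-1)(1-\delta)+(\gamma-2)\delta=\gamma-1-\delta$ and the constant $2^{1-\delta}$ both check out.
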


\begin{proof} 
	Suppose that either  
$t_1 > x_0, t_2 < x_0$, or $t_1 < x_0, t_2 > x_0$.  
According to \eqref{fbound:eq}, for any $\d>0$ we have 
\begin{align}
	|f'(t_1)| \le &\ \1 f\1_1 |t_1-x_0|^{\g-1}\label{fdashup:eq}\\[0.2cm]
	= &\ \1 f\1_1 |t_1-x_0|^{\g-1-\d} |t_1-x_0|^{\d}
	\le \1 f\1_1 (\min_{j=1, 2}|t_j-x_0|)^{\gamma-1-\d}|t_1-t_2|^\d,\notag
\end{align}
Estimating $f'(t_2)$ in the same way we get the claimed bound. 

Suppose now that $t_2\ge t_1>x_0$ or $t_2\le t_1<x_0$. 
Then 
\begin{equation*}
	|f'(t_1) - f'(t_2)|\le |f''(\t)| |t_1-t_2|,\ \ \ \textup{with some}
	\ \ \t\in(t_1, t_2),
\end{equation*}
and hence, by \eqref{fbound:eq},  
\begin{equation*}
|f'(t_1) - f'(t_2)|\le \1 f\1_2 |t_1-x_0|^{\gamma-2} |t_1-t_2|.
\end{equation*}
Together with \eqref{fdashup:eq}, this gives 
\begin{equation*}
|f'(t_1) - f'(t_2)|\le \1 f\1_2 2^{1-\d}|t_1-x_0|^{(\gamma-1)(1-\d)}
|t_1-x_0|^{(\gamma-2)\d} |t_1-t_2|^\d,   
\end{equation*}
for any $\d\in [0, 1]$. 
This leads to \eqref{fdash:eq}, as claimed. 

The cases $t_1>t_2>x_0$ or $t_1<t_2<x_0$ are handled by exchanging the roles of $t_1$ and $t_2$.
\end{proof}

\subsection{Functional $V$}
Let us derive some useful estimates for the functional $V$ defined in \eqref{V:eq}. 
As before, we assume that $f:\R\mapsto\mathbb C$ in the definition
\eqref{cb_pv:eq} satisfies Condition \ref{f:cond} 
with some $\g\in (0, 1]$, $n= 2$ and $x_0\in\R$.

First we make some straightforward observations. 
In view of \eqref{hol:eq} and \eqref{V_bd:eq},  
\begin{equation}\label{V_est:eq}	|V(s_1, s_2; f)|\le 
C_\g \1 f\1_1|s_1-s_2|^\g.
\end{equation}
Furthermore, by definition \eqref{V:eq} and by \eqref{hol:eq}, 
for any $\mu\in (0, 1)$, we have 
\begin{align}\label{approx:eq}
	|V(s_1, s_2; f) - V(r_1, r_2; f)|\le &\ C\1 f\1_1 
	|\log\mu|\bigl(|s_1-r_1|^\g + |s_2-r_2|^\g\bigr) \notag\\[0.2cm]
	&\ + C\1 f\1_1 \mu^\g \bigl(|s_1-s_2|^\g + |r_1-r_2|^\g\bigr)|,
\end{align}
for any real $s_1, r_1, s_2, r_2$. This bound follows 
from \eqref{V:eq} by splitting $V$ into two integrals: 
over $(0, 1-\mu)$ and over $(1-\mu, 1)$.

Now introduce  
\begin{align}
	Y(s_1, s_2; f) = &\ \p_{s_1}V(s_1, s_2; f)
	= 	\int_0^1 f'\bigl(s_1(1-t) + s_2t\bigr)dt,\label{Ydef:eq}\\[0.2cm]
	X(s_1, s_2; f) = &\ Y(s_1, s_2; f) - f'(s_1),\ s_1\not = x_0.\label{Xdef:eq} 
\end{align}

\begin{lem}
	Let $f$ satisfy Condition \ref{f:cond} with 
	$\gamma\in (0, 1]$, $n=2$ and $x_0\in\R$, 
	and let $\d\in [0, \g)$ be some number. 
	Then for all real $s_1\not = x_0$ and all real $s_2$,
	\begin{equation}\label{Xab:eq}
	|X(s_1, s_2; f)|\le 2^{2-\d} (\g-\d)^{-1}
	\1 f\1_2 |s_1-s_2|^{\d} |s_1-x_0|^{\g-1-\d}.
	\end{equation}
	%
\end{lem}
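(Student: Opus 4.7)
The plan is to start from the identity
\bee
X(s_1, s_2; f) = \int_0^1 \bigl[f'(u(t)) - f'(s_1)\bigr]\,dt, \qquad u(t) = s_1 + (s_2 - s_1)t,
\ene
which follows directly from \eqref{Ydef:eq} and \eqref{Xdef:eq}. Applying Lemma~\ref{fdash:lem} with $t_1 = u(t)$, $t_2 = s_1$, and using $|u(t) - s_1| = t|s_2 - s_1|$, yields the pointwise estimate
\bees
|f'(u(t)) - f'(s_1)| \le 2^{1-\delta}\1 f\1_2\bigl(\min(|u(t) - x_0|, |s_1 - x_0|)\bigr)^{\gamma - 1 - \delta} t^\delta |s_2 - s_1|^\delta.
\enes
The problem therefore reduces to a uniform bound
\bees
J := \int_0^1 \bigl(\min(|u(t) - x_0|, |s_1 - x_0|)\bigr)^{\gamma - 1 - \delta} t^\delta \, dt \ \le \ \frac{2\,|s_1 - x_0|^{\gamma - 1 - \delta}}{\gamma - \delta}.
\enes

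I would then parametrise by $\lambda = |s_2 - s_1|/|s_1 - x_0|$ and write $|u(t) - x_0|/|s_1 - x_0| = |1 + \rho t|$, where $\rho = (s_2 - s_1)/(s_1 - x_0)$ satisfies $|\rho| = \lambda$. In the easy case $\rho > 0$ (so $u(t)$ recedes from $x_0$), one has $|u(t) - x_0| \ge |s_1 - x_0|$ for all $t \in [0, 1]$, the minimum equals $|s_1 - x_0|$, and the bound follows immediately. In the delicate case $\rho < 0$ (so $u(t)$ approaches $x_0$), the change of variables $w = \lambda t$ transforms $J$ into
\bees
J = |s_1 - x_0|^{\gamma - 1 - \delta}\,\lambda^{-\delta - 1}\int_0^\lambda \bigl(\min(|1 - w|, 1)\bigr)^{\gamma - 1 - \delta} w^\delta \, dw.
\enes
Splitting $[0, \lambda]$ at $w = 1$ and at $w = 2$, the piece containing the singularity near $w = 1$ is controlled by the (incomplete) Beta integral $B(\delta + 1, \gamma - \delta) = \Gamma(\delta + 1)\Gamma(\gamma - \delta)/\Gamma(\gamma + 1) \le C/(\gamma - \delta)$, while the remaining pieces (on $[1, 2]$ and on $[2, \lambda]$) admit elementary bounds that combine with $\lambda^{-\delta-1}$ to remain of order $1/(\gamma - \delta)$ uniformly in $\lambda > 0$.

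The main obstacle is exactly the singularity of the weight $|u(t) - x_0|^{\gamma - 1 - \delta}$ at $u(t) = x_0$: the compensating factor $t^\delta$ supplied by Lemma~\ref{fdash:lem} is strong enough to tame this singularity only because, after the rescaling $w = \lambda t$, it becomes precisely the $w^\delta$ weight required for the Beta integral to converge. The simple pole of $\Gamma(\gamma - \delta)$ at $\delta = \gamma$ is what produces the sharp $(\gamma - \delta)^{-1}$ factor, and combining it with the prefactor $2^{1-\delta}\1 f\1_2$ from the pointwise bound accounts for the stated constant $2^{2-\delta}(\gamma - \delta)^{-1}$ in \eqref{Xab:eq}.
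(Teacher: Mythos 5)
Your proposal is correct, and it takes a genuinely different route from the paper. The paper splits by the sign configuration of $s_1-x_0$ and $s_2-x_0$: in the same-side case it uses $\min_j|t_j-x_0|\ge(1-t)|s_1-x_0|$ along the segment and integrates the resulting Beta-type kernel $(1-t)^{\gamma-1-\delta}t^\delta$; in the opposite-side case it factors $|u(t)-x_0| = |s_1-s_2|\,|t-z|$ with $z\in(0,1)$, uses $|s_1-s_2|^{\gamma-1-\delta} < |s_1-x_0|^{\gamma-1-\delta}$, and then $\int_0^1|t-z|^{\gamma-1-\delta}\,dt\le 2/(\gamma-\delta)$. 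You instead normalise by $|s_1-x_0|$, introduce $\rho = (s_2-s_1)/(s_1-x_0)$ and $\lambda = |\rho|$, dispatch $\rho>0$ immediately, and for $\rho<0$ rescale to $w=\lambda t$ so that the whole problem collapses to a single one-parameter integral $\lambda^{-\delta-1}\int_0^\lambda\bigl(\min(|1-w|,1)\bigr)^{\gamma-1-\delta}w^\delta\,dw$, to be bounded uniformly in $\lambda>0$. This unifies the paper's two cases (your $\rho<0$ regime covers both ``same side with $s_2$ closer to $x_0$'' and ``opposite sides''), but the price is the uniformity argument: the regimes $\lambda\le 1$, $\lambda\in[1,2]$ and $\lambda>2$ must be checked separately, and you gloss over the first. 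For small $\lambda$ the prefactor $\lambda^{-\delta-1}$ is not tamed by the full Beta integral over $[0,1]$; you instead need $\int_0^\lambda(1-w)^{\gamma-1-\delta}w^\delta\,dw = O(\lambda^{\delta+1})$ (e.g. bound $w^\delta\le\lambda^\delta$ and use $1-(1-\lambda)^{\gamma-\delta}\le(\gamma-\delta)\lambda$). This gap is closable, but should be stated. Both routes produce the $(\gamma-\delta)^{-1}$ from the same source — integrability of the degree-$(\gamma-1-\delta)$ singularity — and both match the target exponents in $|s_1-s_2|$ and $|s_1-x_0|$; one small structural advantage of yours is that it carries the $\min$ supplied by Lemma~\ref{fdash:lem} intact all the way through the rescaling rather than trading it for $|u(t)-x_0|^{\gamma-1-\delta}$ at the outset.
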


\begin{proof} 
	Represent $X$ in the form
	\begin{equation*}
		X(s_1, s_2; f)= 
		\int_0^1 \bigl[f'\bigl((1-t)s_1 + t s_2\bigr)
		- f'(s_1)\bigr] dt.
	\end{equation*} 
	First suppose that either 
	$s_1 > x_0, s_2 \ge x_0$, or $s_1 < x_0, s_2 \le x_0$.
	Then, by \eqref{fdash:eq}, 
	\begin{align*}
		\bigl|f'\bigl((1-t)s_1 + t s_2\bigr)
		- &\  f'(s_1 )\bigr|\\[0.2cm]
		\le &\ 2^{1-\d}\1 f\1_2 (1-t)^{\g-1-\d} t^\d
		|s_1-x_0|^{\g - 1 -\d}|s_1-s_2|^\d, 
	\end{align*}
	for any $\d\in [0, 1]$. Consequently,
	\begin{equation*}
		|X(s_1, s_2; f)|\le
		2^{1-\d}\1 f\1_2|s_1- s_2|^{\d}  
		|s_1-x_0|^{\g-1-\d}
		\int_0^1 (1-t)^{\g-1-\d} t^{\d}dt.
	\end{equation*}
	The integral is finite for $\d\in [0, \g)$, which leads to \eqref{Xab:eq}.
	
	Now suppose that either  
	$s_1 > x_0, s_2 < x_0$, or $s_1 < x_0, s_2 > x_0$.  
	According to \eqref{fdash:eq}, 
	\begin{align*}
		\bigl|f'\bigl((1-t)s_1  +&\  t s_2\bigr) - f'(s_1)
		\bigr|\\[0.2cm]
		\le &\ 2^{1-\d}\1 f\1_2 
		|(1-t)s_1+ts_2-x_0|^{\g - 1 -\d} t^\d|s_1-s_2|^\d\\[0.2cm]
		= &\ 2^{1-\d}\1 f\1_2 |s_1-s_2|^{\g-1-\d}
		\biggl|
		t-\frac{s_1-x_0}{s_1-s_2}
		\biggr|^{\g-1-\d}t^\d|s_1-s_2|^\d, \ \ t\not = \frac{s_1-x_0}{s_1-s_2}.
	\end{align*}
	Since $\g\le 1$ and $|s_1-s_2|> |s_1-x_0|$, we estimate
	\begin{equation*}
		|s_1-s_2|^{\g-1-\d} < |s_1-x_0|^{\g-1-\d}.
	\end{equation*} 
	Furthermore,
	\begin{equation*}
		\int_0^1 |t-z|^{\g-1-\d} dt \le \frac{2}{\g-\d}
	\end{equation*}
	uniformly in $z\in [0, 1]$. This implies \eqref{Xab:eq}.
\end{proof}

\section{Two lemmas on integrals of polynomials}
 In this section we prepare two elementary results involving real-valued 
 polynomial functions $a$. 

For a closed interval $I\subset \R$ we denote 
by $|I|$ its length (the Lebesgue measure). For a 
smooth function $a$ on $I$ we denote 
by $\|a\|_{\plainL{p}}$ its $\plainL{p}$-norm on the interval $I$.

\begin{lem} \label{zeros:lem} 
	Let $I\in\R$ be a closed interval, and let $a$ be a real-valued 
	polynomial. 
	Suppose that $I$ contains at least 
	$N-1$ distinct critical points of the function $a$, 
	with some $N = 1, 2, \dots$. Let $p\in [1, \infty]$ be arbitrary.  Then for any $\g\in (0, 1]$ 
	and any two points 
	$\eta_1, \eta_2\in I$ the bound holds 
	\begin{equation}\label{zeros:eq}
	||a(\eta_1)|^\g - |a(\eta_2)|^\g|\le \| a^{(N)}\|_{\plainL{p}}^{\g} |I|^{\g (N-\frac{1}{p})}.
	\end{equation} 
	If $I$ contains exactly $N-1$ distinct critical points of $a$, then 
	the total variation $\textup{Var}[|a|^\g; I]$ 
	of the function $|a|^\g$ on the interval $I$  
	satisfies the bound
	\begin{equation}\label{agamma:eq}
	\textup{Var}[|a|^\g; I]\le (N+1)^2 \| a^{(N)}\|_{\plainL{p}}^{\g} |I|^{\g (N-\frac{1}{p})}.
	\end{equation} 
	\end{lem}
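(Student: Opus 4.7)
The plan is to reduce both bounds to pointwise control of the low-order derivatives of $a$ via iterated applications of Rolle's theorem and the fundamental theorem of calculus.

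For the first bound \eqref{zeros:eq} I would begin by iterating Rolle's theorem: since $a'$ has $N-1$ distinct zeros in $I$, the $k$-th derivative $a^{(k)}$ has at least $N-k$ distinct zeros in $I$ for $k=1,\dots,N-1$, so in particular a zero $\zeta_k\in I$ exists for every such $k$. Starting from level $k=N-1$ and descending, I would use
\[
a^{(k)}(\eta)=\int_{\zeta_k}^\eta a^{(k+1)}(t)\,dt,\qquad\eta\in I.
\]
A single application of H\"older's inequality at $k=N-1$ yields $\sup_I|a^{(N-1)}|\le \|a^{(N)}\|_{\plainL{p}}|I|^{1-1/p}$, and each further descent multiplies by a factor of $|I|$. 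The outcome is $\sup_I|a'|\le \|a^{(N)}\|_{\plainL{p}}|I|^{N-1-1/p}$, and one more application of the fundamental theorem gives $|a(\eta_1)-a(\eta_2)|\le \|a^{(N)}\|_{\plainL{p}}|I|^{N-1/p}$ for all $\eta_1,\eta_2\in I$. Combining this with the elementary inequality $||s|^\g-|t|^\g|\le |s-t|^\g$, valid for real $s,t$ and $\g\in(0,1]$, completes the proof of \eqref{zeros:eq}.

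For the total variation bound \eqref{agamma:eq}, under the additional assumption that $I$ contains exactly $N-1$ distinct critical points of $a$, the reverse form of Rolle's theorem bounds the number of distinct zeros of $a$ in $I$ by $N$. I would then partition $I$ by the union of the critical points of $a$ and the zeros of $a$ in $I$, producing at most $2N$ open subintervals. On each such subinterval $a$ is strictly signed (since we cut at the zeros of $a$) and monotone (since we cut at the critical points), so $|a|^\g$ is monotone there, and its variation equals the absolute difference of its endpoint values. Each such difference is bounded by \eqref{zeros:eq} applied on the whole of $I$, so summing over the at most $2N\le(N+1)^2$ pieces yields \eqref{agamma:eq}.

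The argument is essentially bookkeeping; the only point requiring a little care is the monotonicity count of $|a|^\g$, where one must cut $I$ simultaneously at the $N-1$ critical points of $a$ and at the cusps of $|a|^\g$ (the zeros of $a$), while using Rolle's theorem in both directions to keep both counts small.
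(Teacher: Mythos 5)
Your proof of \eqref{zeros:eq} is exactly the paper's: iterated Rolle to get a zero of $a^{(N-1)}$, one H\"older step at the top, descend by FTC, and finish with $||s|^\g-|t|^\g|\le|s-t|^\g$. For \eqref{agamma:eq} your argument is a correct minor reorganization of the paper's: the paper first cuts $I$ at the (at most $N$) zeros of $a$ into sign-definite pieces and then partitions each of those into at most $N$ monotonicity pieces, arriving at a factor $N(N+1)\le (N+1)^2$, whereas you cut simultaneously at the $N-1$ critical points and $\le N$ zeros to get $\le 2N$ pieces on which $|a|^\g$ is monotone, and use $2N\le(N+1)^2$; the key steps (reverse Rolle to count zeros, monotonicity of $|a|^\g$ on the resulting pieces, application of \eqref{zeros:eq} to the endpoints of each piece) are identical, so this is essentially the same proof.
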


\begin{proof} 
	Assume without loss of generality that $\|a^{(N)}\|_{\plainL{p}}\le 1$. 
	Since the interval $I$ contains at least $N-1$ distinct zeros of $a'$,  
	by an elementary argument, 
	the interval $I$ also contains at least $N-2$ distinct zeros of $a''$, 
	$N-3$ distinct zeros of $a'''$, and eventually, 
	at least one point $\xi_0$, such that $a^{(N-1)}(\xi_0) = 0$. 
	This means that
	\begin{equation*}
	|a^{(N-1)}(\xi)|\le \int_{\xi_0}^{\xi}
	|a^{(N)}(\eta)| d\eta\le 
	\|a^{(N)}\|_{\plainL{p}} |I|^{1-\frac{1}{p}}\le |I|^{1-\frac{1}{p}},\ \forall \xi\in I.
	\end{equation*} 
	From this bound we obtain consecutively that 
	$ |a^{(N-2)}(\xi)|\le |I|^{2-\frac{1}{p}}$, $|a^{(N-3)}(\xi)|\le |I|^{3-\frac{1}{p}}$, and in general, 
	$|a^{(k)}(\xi)|\le |I|^{N-k-\frac{1}{p}}$, $k = 1, 2, \dots, N-1$. 
	In particular, $|a'(\xi)|\le |I|^{N-1-\frac{1}{p}}$, so that for any $\eta_1, \eta_2\in I$ we have 
	\begin{equation*}
	a(\eta_1) - a(\eta_2) = w(\eta_1, \eta_2), \ 
	|w(\eta_1, \eta_2)|\le |I|^{N-1-\frac{1}{p}} |\eta_1-\eta_2|
	\le |I|^{N-\frac{1}{p}}. 
	\end{equation*}
	Thus 
	\begin{equation*}
	||a(\eta_1)|^\g - |a(\eta_2)|^\g|\le |w(\eta_1, \eta_2)|^\g\le |I|^{\g(N-\frac{1}{p})},
	\end{equation*} 
	as claimed.

	In order to prove \eqref{agamma:eq},  note that the polynomial $a$ 
	has at most $N$ distinct roots on $I$, and hence 
	there are at most $N+1$ intervals where the polynomial $a$ is sign-definite. Using the additivity 
	of total variation, it suffices to prove that on each of these 
	intervals the total variation does not exceed 
	$  (N+1)\| a^{(N)}\|_{\plainL{p}}^{\g} |I|^{\g (N-\frac{1}{p})}$. 
	Assume for simplicity that $a(\xi)\ge 0$ for all $\xi\in I$. 
	Partition $I$ into 
	intervals $\{I_{j}\}$ on which the function $a$ is monotone. Thus by \eqref{zeros:eq},
	\begin{equation*}
	\textup{Var}[|a|^\g; I_{j}]\le \| a^{(N)}\|_{\plainL{p}}^{\g} |I|^{\g (N-\frac{1}{p})}.
	\end{equation*}
	As the number of intervals $I_j$ does not exceed 
	$N$, we immediately obtain the required bound. 
\end{proof}

\begin{lem} \label{zeros2:lem} 
	Let $I\in\R$ be a closed interval, such that $|I|\le r$ with some number 
	$r>0$, and let 
	$a$ be a real-valued polynomial. 
	Let $\gamma\in (0, 1]$, $p\in [1, \infty]$ and $N \ge \gamma^{-1}+p^{-1}$.  
	Then the total variation $\textup{Var}[|a|^\g; I]$ 
	of the function $|a|^\g$ on the interval $I$  
	satisfies the bound
	\begin{equation}\label{agamma2:eq}
	\textup{Var}[|a|^\g; I]\le C_\g (N+1)^2\| a'\|_{\plainW{N-1}{p}}^{\g} 
	|I|^{(1-p^{-1})\g},  
	\end{equation} 
	and hence, 
	\begin{equation}\label{agamma3:eq}	
	\int_I |a'(\xi)| |a(\xi)|^{\g-1} d\xi \le C_\g (N+1)^2\| a'\|_{\plainW{N-1}{p}}^{\g} 
	|I|^{(1-p^{-1})\g},
	\end{equation}
	with a constant $C_\g = C_\g(r)$ independent of $a$ and $N$. 
\end{lem}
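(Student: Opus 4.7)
The plan is to reduce first to the case $|I|\le 1$ by covering, and then to split the argument according to the number $k$ of distinct critical points of $a$ in $I$. The essential ingredient is Lemma~\ref{zeros:lem} combined with a careful choice of H\"older exponents. If $|I|>1$, I cover $I$ by at most $\lceil r\rceil+1$ disjoint subintervals of length $\le 1$, apply the result on each, and sum using $\sum x_i^\g\le M_0^{1-\g}(\sum x_i)^\g$ and the $\plainL p$-additivity of the Sobolev norm; since $|I|^{(1-1/p)\g}\ge 1$ in this regime, the combinatorial constants collapse into $C_\g(r)$, and it therefore suffices to prove \eqref{agamma2:eq} assuming $|I|\le 1$.

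In the case $k\le N-1$ of few critical points, I apply \eqref{agamma:eq} with $N$ replaced by $k+1\le N$, which is legal since $I$ then contains exactly $(k+1)-1=k$ critical points of $a$ as required. The resulting bound $(k+2)^2\|a^{(k+1)}\|_{\plainL{p}(I)}^\g|I|^{\g(k+1-1/p)}$ is majorized by $(N+1)^2\|a'\|_{\plainW{N-1}{p}(I)}^\g|I|^{\g(1-1/p)}$ because $|I|\le 1$ absorbs the extra factor $|I|^{\g k}$, while $\|a^{(k+1)}\|_{\plainL{p}}\le\|a'\|_{\plainW{N-1}{p}}$ as $k+1\le N$, and $(k+2)^2\le (N+1)^2$.

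In the case $k\ge N$ of many critical points, I partition $I$ into disjoint pieces $L_1,\dots,L_M$ (with $M=\lceil k/(N-1)\rceil$) chosen so that each $L_j$ for $j<M$ contains exactly $N-1$ critical points of $a$, while $L_M$ contains at most $N-1$. On each full piece, \eqref{agamma:eq} gives $\textup{Var}[|a|^\g;L_j]\le(N+1)^2\|a^{(N)}\|_{\plainL{p}(L_j)}^\g|L_j|^{\g(N-1/p)}$. The decisive step is summing these contributions without an $M$-dependent loss: H\"older's inequality with conjugate exponents $p/\g$ and $p/(p-\g)$ yields
\[
\sum_{j<M}\|a^{(N)}\|_{\plainL{p}(L_j)}^\g|L_j|^{\g(N-1/p)}
\le\|a^{(N)}\|_{\plainL{p}(I)}^\g\Bigl(\sum_{j<M}|L_j|^{\b}\Bigr)^{(p-\g)/p},
\]
where $\b=(Np-1)\g/(p-\g)$. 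The hypothesis $N\ge\g^{-1}+p^{-1}$ implies $N\g\ge 1$ and hence $\b\ge 1$; subadditivity then gives $\sum|L_j|^\b\le(\sum|L_j|)^\b\le|I|^\b$, producing $\|a^{(N)}\|_{\plainL{p}(I)}^\g|I|^{\g(N-1/p)}\le\|a'\|_{\plainW{N-1}{p}(I)}^\g|I|^{\g(1-1/p)}$ after invoking $|I|\le 1$. The remainder piece $L_M$ is handled by the few-critical-points case of the previous paragraph.

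The integral bound \eqref{agamma3:eq} follows from \eqref{agamma2:eq} through the identity $\textup{Var}[|a|^\g;I]=\g\int_I|a(\xi)|^{\g-1}|a'(\xi)|d\xi$, valid for a polynomial $a$ since its zeros are isolated and the integrand remains locally integrable at each. The main obstacle is the choice of H\"older exponents in the summation step: the naive pairing $1/\g$ and $1/(1-\g)$ would contribute a fatal factor $M^{1-\g}$, since $M$ is unbounded in the degree of $a$. The pairing $p/\g$ and $p/(p-\g)$ works precisely because $N\g\ge 1$ forces the auxiliary exponent $\b$ to be at least $1$, eliminating all $M$-dependence.
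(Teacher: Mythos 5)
Your proof is correct (with a minor caveat at $p=\g=1$, see below) and takes a genuinely different route from the paper at the decisive summation step. The paper partitions $I$ into pieces $I_1,\dots,I_{K}$, the first $K-1$ containing exactly $N-1$ critical points each and $I_K$ fewer, just as you do; but to sum the contributions of the full pieces it exploits $\g(N-p^{-1})\ge 1$ to peel off a \emph{linear} factor from $|I_k|^{\g(N-p^{-1})}$, bounding the residual power of $|I_k|$ by a power of $r$ and then summing $\sum_k|I_k|=|I|$ telescopically; the leftover piece $I_K$ is treated via \eqref{agamma:eq} at the reduced order $L\le N$, exactly as in your few-critical-points case. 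You instead decouple the norm and length factors by H\"older with exponents $p/\g$ and $p/(p-\g)$, so that $\sum_j\|a^{(N)}\|_{\plainL{p}(L_j)}^p$ collapses to $\|a^{(N)}\|_{\plainL{p}(I)}^p$ by additivity, and then the observation $\b=\g(Np-1)/(p-\g)\ge 1$ (equivalent to $N\g\ge 1$) allows the superadditive bound $\sum_j|L_j|^\b\le|I|^\b$. Both mechanisms use the hypothesis on $N$ in an essential way and both are valid; the H\"older route is arguably cleaner and, combined with your preliminary reduction to $|I|\le 1$, avoids any accumulation of $r$-powers inside the core estimate (the paper's bound as written picks up $r^{\g(N-1)}$, which is only harmless because $N$ is fixed in the intended application). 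The one point where your argument needs a patch is the degenerate boundary case $p=\g=1$, where $p/(p-\g)$ is undefined; there $\textup{Var}[|a|;I]=\|a'\|_{\plainL{1}(I)}$ directly, so this is a cosmetic rather than a substantive gap.
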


\begin{proof} 
	Let $I_k, k = 1, 2, K,$ be non-empty closed intervals with disjoint interiors 
	such that $I = \cup_k I_k$, and satisfying the following 
	requirements: 
	\begin{itemize}
		\item
		each $I_k$, $k = 1, 2, \dots, K-1,$ contains exactly  $N-1$ critical points of 
		$a$, 
		\item
		the interval $I_K$ contains no more than $N-1$ critical points of $a$. 
	\end{itemize}
	By \eqref{agamma:eq}, for any $k = 1, 2, \dots, K-1$ we have 
	\begin{align}\label{ik:eq}
	\textup{Var}[|a|^\g; I_k]
	\le &\ (N+1)^2\| a^{(N)}\|_{\plainL{p}}^{\g} |I_k|^{\g (N-\frac{1}{p})}\notag\\[0.2cm]
	\le &\ r^{\g (N-\frac{1}{p})-1}(N+1)^2\| a^{(N)}\|_{\plainL1}^{\g} |I_k|,  
	\end{align}
	where we have used that $\g (N-p^{-1}) \ge 1$. 
	Furthermore, by \eqref{agamma:eq} again, 
	\begin{align}\label{klast:eq}
	\textup{Var}[|a|^\g; I_K]\le &\ (L+1)^2\| a^{(L)}\|_{\plainL{p}}^{\g} 
	|I|^{(L-\frac{1}{p})\g},\notag\\[0.2cm]
	\le &\ r^{(L-1)\g}(N+1)^2\| a'\|_{\plainW{N-1}{p}}^{\g} 
	|I|^{(1-\frac{1}{p})\g}, 
	\end{align}
	where $L-1\le N-1$ is the number of critical points on $I_K$. 
	By the additivity, the inequalities \eqref{ik:eq} and \eqref{klast:eq} 
	lead to \eqref{agamma2:eq}. 
	The left-hand side of \eqref{agamma2:eq} coincides with that 
	of \eqref{agamma3:eq}. This completes the proof. 	
\end{proof}

 \section{Proof of Theorem \ref{bbound:thm}}

We begin the proof of Theorem \ref{bbound:thm} with estimating $\CB_1(a; f)$, which 
will produce the integral term on the right-hand side 
of \eqref{bbound:eq}.  The function $f$ is assumed to satisfy Condition \ref{f:cond}. 
As before, all constants in the estimates below are independent 
of the symbol $a$, function $f$, parameter $x_0$, 
but may depend on $\g\in (0, 1]$ and other relevant parameters unless otherwise stated. 

\begin{lem}\label{B1:lem} 
Assume that $f$ is as specified above. Then 
 	\begin{equation*}
 	|\CB_1(a; f)|\le  C_\g \1 f\1_1 \underset{|\xi_1-\xi_2|>1}
 	\iint \frac{|a(\xi_1) - a(\xi_2)|^\g}{|\xi_1-\xi_2|^2} d\xi_1 d\xi_2.
 	\end{equation*}
\end{lem}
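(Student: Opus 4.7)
The plan is to reduce the bound on $\CB_1(a;f)$ to a pointwise estimate of the integrand that has already been established in the preceding section. By the definition \eqref{cb_pv:eq} we have
\[
\CB_1(a; f) = \frac{1}{8\pi^2}\underset{|\xi_1-\xi_2|>1}\iint \frac{U(a(\xi_1), a(\xi_2); f)}{|\xi_1-\xi_2|^2}\, d\xi_1\, d\xi_2,
\]
so the task reduces to controlling $|U(a(\xi_1), a(\xi_2); f)|$ by a constant multiple of $\1 f\1_1 |a(\xi_1)-a(\xi_2)|^\gamma$.

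To obtain such a pointwise bound, I would invoke the decomposition \eqref{VU:eq}, namely $U(s_1, s_2; f) = V(s_2, s_1; f) + V(s_1, s_2; f)$, together with the H\"older-type estimate \eqref{V_est:eq}, which gives $|V(s_1, s_2; f)| \le C_\gamma \1 f\1_1 |s_1-s_2|^\gamma$. Since this latter estimate only requires Condition \ref{f:cond} with $n=1$ (via the H\"older continuity \eqref{hol:eq}), the assumption $n=2$ is more than enough. Combining the two yields
\[
|U(a(\xi_1), a(\xi_2); f)| \le 2 C_\gamma \1 f\1_1 |a(\xi_1) - a(\xi_2)|^\gamma
\]
for all $\xi_1, \xi_2 \in \R$.

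Substituting this pointwise estimate into the expression for $\CB_1(a; f)$ and absorbing the factor $1/(8\pi^2)$ into the constant produces exactly the claimed bound. There is no real obstacle here: the lemma is essentially a direct consequence of the previously recorded identities and H\"older-type bounds on $V$, and the constraint $|\xi_1-\xi_2|>1$ on the domain of integration plays no role in this step; it will matter only later when the complementary region $|\xi_1-\xi_2|<1$ is analysed and the finer information encoded in $\1 f\1_2$ and the Sobolev norm of $a'$ is brought in.
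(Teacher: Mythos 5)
Your argument is correct and is essentially the paper's proof: the paper simply cites the $V$-representation \eqref{cb2_pv:eq} (which already incorporates \eqref{VU:eq}) together with \eqref{V_est:eq}, whereas you start from the $U$-form \eqref{cb_pv:eq} and apply \eqref{VU:eq} explicitly, which is the same reduction spelled out one step further. No gap.
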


 \begin{proof} The required bound  immediately follows from \eqref{cb2_pv:eq} 
 	and \eqref{V_est:eq}.
 \end{proof}
 
The remaining part of the coefficient $\CB(a; f)$ is studied with the help of 
a suitable partition of unity on $\R$.  
For a function $\zeta\in\plainC\infty_0(\R)$ and numbers $R>0$, 
$\varepsilon\in (0, R)$, define  
\begin{equation}\label{CD:eq}
\CD_{\varepsilon, R}(a; \zeta, f) = \frac{1}{4\pi^2}
\underset{\varepsilon<|\xi_1-\xi_2|< R}\iint
\zeta(\xi_1)\frac{V\bigl(a(\xi_1), a(\xi_2); f\bigr)}{|\xi_1-\xi_2|^2}
d\xi_1 d\xi_2.
\end{equation}
In all the subsequent bounds the constants are independent of the 
cut-off $\z$, and of the parameters $\varepsilon, R$.

\begin{thm} \label{CD:thm}
	Let $\zeta\in\plainC1_0(-1, 1)$, and let $a\in\plainW{N}{p}(-2, 2)$ 
	with some $p\in (1, \infty]$ and $N\ge \g^{-1}+p^{-1}$. Then 
	\begin{equation}\label{CD1:eq}
		|\CD_{\varepsilon, R}(a; \zeta, f)|
		\le C_{\g, \d}  
		\| \z\|_{\plainC1} \1 f\1_2 R^{(1-\frac{1}{p})\d} A_{N, p}(a)^\g,
	\end{equation}		
	for any $\d\in (0, \g)$, uniformly 
	in $R\in (0, 1]$ and $\varepsilon\in (0, R]$.
Here
\begin{equation}\label{dernorm:eq}
A_{N, p}(a) = \|a'\|_{\plainW{N-1}{p}},
\end{equation}
where the norm is taken on the interval $(- 2, 2)$. 

Furthermore, the limit of $\CD_{\varepsilon, R}(a; \zeta, f)$ as $\varepsilon\to 0$, 
exists. 
\end{thm}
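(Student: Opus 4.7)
The plan is to integrate by parts in $\xi_1$, thereby reducing the singularity of the kernel from $(\xi_1-\xi_2)^{-2}$ to $(\xi_1-\xi_2)^{-1}$, to exploit the antisymmetry of the resulting kernel over the symmetric annulus $\varepsilon<|\xi_1-\xi_2|<R$ so that the leading $f'(a(\xi_1))$-contribution cancels, and finally to estimate the remainder using the $X$-bound \eqref{Xab:eq} together with Lemma \ref{zeros2:lem}. Concretely, using $(\xi_1-\xi_2)^{-2}=-\partial_{\xi_1}(\xi_1-\xi_2)^{-1}$ I rewrite
\begin{equation*}
4\pi^2\CD_{\varepsilon,R}(a;\zeta,f)=\mathcal I_{\mathrm{bdry}}(\varepsilon,R)+\iint_{\varepsilon<|\xi_1-\xi_2|<R}\frac{\partial_{\xi_1}\bigl[\zeta(\xi_1)V(a(\xi_1),a(\xi_2);f)\bigr]}{\xi_1-\xi_2}\,d\xi_1\,d\xi_2,
\end{equation*}
where $\mathcal I_{\mathrm{bdry}}$ collects the boundary terms at $|\xi_1-\xi_2|\in\{\varepsilon,R\}$. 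By \eqref{Ydef:eq}--\eqref{Xdef:eq}, $\partial_{\xi_1}[\zeta V]=\zeta'(\xi_1)V+\zeta(\xi_1)a'(\xi_1)[f'(a(\xi_1))+X(a(\xi_1),a(\xi_2);f)]$, and the $f'(a(\xi_1))$-contribution is independent of $\xi_2$; its inner $\xi_2$-integral over the symmetric annulus vanishes for every fixed $\xi_1$, removing the most singular term.

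For the $X$-contribution I invoke \eqref{Xab:eq}: $|X(a_1,a_2;f)|\le C_\delta\, \1 f\1_2\,|a_1-a_2|^\delta |a_1-x_0|^{\gamma-1-\delta}$ for a fixed $\delta\in(0,\gamma)$. Combining this with the elementary bound $|a(\xi_1)-a(\xi_2)|\le\|a'\|_{\plainL\infty(-2,2)}|\xi_1-\xi_2|$ and integrating in $\xi_2$ over $|\xi_1-\xi_2|<R$ produces a prefactor $\lesssim R^\delta$, leaving the one-dimensional integral $\int|\zeta(\xi_1)a'(\xi_1)|\,|a(\xi_1)-x_0|^{\gamma-1-\delta}\,d\xi_1$. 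I bound this by covering $\supp(\zeta)\subset(-1,1)$ by sub-intervals of length $\sim R$, replacing $a$ on each sub-interval by its Taylor polynomial of degree $N-1$ (controlling the Taylor remainder by $A_{N,p}(a)$ via Sobolev embedding), and applying Lemma \ref{zeros2:lem} to $a-x_0$ with $\gamma$ replaced by $\gamma-\delta$. Summation of the local estimates reproduces the claimed $A_{N,p}(a)^\gamma R^{(1-1/p)\delta}$ scaling. The $\zeta'V$ term and the $R$-boundary contribution are estimated directly via \eqref{V_est:eq} together with the Lipschitz bound on $a$, and are absorbed into the same estimate.

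The principal obstacle is the $\varepsilon$-boundary term, which formally scales as $\varepsilon^{\gamma-1}$ (since $|V(a(\xi_2\pm\varepsilon),a(\xi_2);f)|\lesssim\varepsilon^\gamma$ for non-smooth $f$ with $\gamma<1$) and therefore diverges as $\varepsilon\to 0$. The resolution is that this formally divergent boundary contribution is compensated by a correspondingly singular portion of the $X$-integral coming from the region where $a(\xi_1)$ is close to $x_0$; only a finite residue remains. Existence of the $\varepsilon\downarrow 0$ limit is then established by a Cauchy-type argument: for $0<\varepsilon<\varepsilon'\le R$ the difference $\CD_{\varepsilon',R}-\CD_{\varepsilon,R}$ is an integral over the thin annulus $\varepsilon<|\xi_1-\xi_2|<\varepsilon'$ that is controlled by a refined application of the above estimates and satisfies a bound of order $(\varepsilon')^\delta\to 0$. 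This both proves convergence and recovers the stated bound in the $\varepsilon\to 0$ limit.
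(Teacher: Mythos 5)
Your overall scheme is the right one: integrate by parts in $\xi_1$, decompose $\partial_{\xi_1}V$ through $Y=f'+X$ so that the $f'(a(\xi_1))$-part drops out over the symmetric annulus, bound $X$ via \eqref{Xab:eq}, and control $\int|\zeta a'|\,|a-x_0|^{\g-1-\d}$ by reducing to a polynomial and invoking Lemma~\ref{zeros2:lem}. This matches the paper's decomposition \eqref{splitCD:eq} into $\CD^{(1)}$, $\CD^{(2)}$, and the boundary terms $\CD^{(3)}_\varepsilon$, $\CD^{(3)}_R$, with Lemmas~\ref{D2:lem}--\ref{D1:lem} playing exactly the roles you describe. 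The polynomial reduction you propose (local Taylor polynomials with Sobolev-embedding control of the remainder) differs in form from the paper's global polynomial approximation combined with the $V$-stability estimate \eqref{approx:eq}, but it is a workable alternative of the same flavor.

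The genuine gap is your handling of the $\varepsilon$-boundary term. You assert that it ``formally scales as $\varepsilon^{\g-1}$ and therefore diverges,'' and that this divergence is ``compensated by a correspondingly singular portion of the $X$-integral.'' Neither claim is correct, and no argument for the alleged compensation is offered. In the paper (and in fact), each of $\CD^{(1)}_{\varepsilon,R}$ and $\CD^{(3)}_\varepsilon$ is \emph{separately} bounded uniformly in $\varepsilon$ --- there is no cross-cancellation between them. The crude estimate $|V(a(\xi\pm\varepsilon),a(\xi);f)|\lesssim\varepsilon^{\g}$ is simply too lossy. The correct mechanism, used in the paper's lemma following Lemma~\ref{D2:lem}, is to write $\zeta(\xi\pm\varepsilon)V(a(\xi\pm\varepsilon),a(\xi);f)=\pm\int_0^{\varepsilon}\partial_\nu\!\left[\zeta(\xi\pm\nu)V(a(\xi\pm\nu),a(\xi);f)\right]d\nu$ (the integrand vanishes at $\nu=0$), which produces $\zeta'V + \zeta a'\,Y$ with $Y=f'+X$. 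After shifting $\xi\mapsto\xi\mp\nu$, the $\zeta a'f'(a)$ piece is $\nu$-independent and identical for the two signs, so it cancels in the difference; the $\zeta'V$ and $\zeta a' X$ pieces are bounded using $|V|\lesssim\nu^{(1-1/p)\g}$ and $|X|\lesssim\nu^{(1-1/p)\d}|a(\xi)-x_0|^{\g-1-\d}$, with the $\xi$-integral of $|a'|\,|a-x_0|^{\g-1-\d}$ controlled by Lemma~\ref{zeros2:lem}. The $1/\varepsilon$ prefactor is then beaten by $\int_0^\varepsilon\nu^{\text{(positive power)}}d\nu$, so $\CD^{(3)}_\varepsilon\to0$ as $\varepsilon\to0$. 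This is the same cancellation you correctly identify in the interior integral, applied a second time at the boundary --- not an interaction with the $X$-integral. Without this observation your ``finite residue'' claim is unsupported and the proof of existence of the $\varepsilon\to0$ limit does not go through.
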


Note the following straightforward estimate:
 \begin{equation}\label{Lp:eq}
 |a(\xi_1) - a(\xi_2)|\le 
 A_{1, p}(a)
 |\xi_1-\xi_2|^{1-\frac{1}{p}}, \ \xi_1, \xi_2\in (-2, 2). 
 \end{equation}

Integrating \eqref{CD:eq} by parts we get:
  \begin{equation}\label{splitCD:eq}
  \CD_{\varepsilon, R}(a; \zeta, f) 
  =  \CD_{\varepsilon, R}^{(1)}(a; \zeta, f)
  + \CD_{\varepsilon, R}^{(2)}(a; \zeta, f)
  + \CD_{\varepsilon}^{(3)}(a; \zeta, f)
  - \CD_{R}^{(3)}(a; \zeta, f)
  \end{equation}
  with
  \begin{align*}
  \CD_{\varepsilon, R}^{(1)} (a; \zeta, f)
  = &\ \frac{1}{4\pi^2}
  \underset{\varepsilon<|\xi_1-\xi_2|< R}\iint
  \frac{\zeta(\xi_1)}{\xi_1-\xi_2} 
  \frac{\p }{\p\xi_1}V\bigl(a(\xi_1), a(\xi_2); f\bigr)
  d\xi_1 d\xi_2,\\[0.2cm]
  \CD_{\varepsilon, R}^{(2)} (a; \zeta, f)
  = &\ \frac{1}{4\pi^2}
  \underset{\varepsilon<|\xi_1-\xi_2|< R}\iint
  \frac{V\bigl(a(\xi_1), a(\xi_2); f\bigr)}{\xi_1-\xi_2} 
  \frac{\p }{\p\xi_1}\zeta(\xi_1)
  d\xi_1 d\xi_2, 
  \end{align*}
  and 
  \begin{align} 
  	\CD_{\varepsilon}^{(3)} (a; \zeta, f)
  	= &\ \frac{1}{4\pi^2 \varepsilon}
  	\int \bigl[
  	\zeta(\xi+\varepsilon)
  	V\bigl(a(\xi + \varepsilon), a(\xi); f\bigr)\notag\\[0.2cm]
  	&\ \quad\quad\quad + 
  	\zeta(\xi-\varepsilon)V\bigl(a(\xi - \varepsilon), a(\xi); f\bigr)
  	\bigr]d\xi, \label{D3:eq}
  \end{align}
 Below we estimate each term separately. 
   
  \begin{lem}\label{D2:lem}
  	Suppose that $\zeta\in\plainC\infty_0(-1, 1)$ and that 
  	$a\in\plainW{1}{p}(-2, 2)$, 
  	$p\in (1, \infty]$. 
  	Then 
  	\begin{equation}\label{D2:eq}
  	|\CD_{\varepsilon, R}^{(2)}(a; \zeta, f)|\le 
  	C_\gamma \1 f\1_1\max|\zeta'| R^{(1-\frac{1}{p})\g} A_{1, p}(a)^\g, 
  	\end{equation}
  	uniformly in $ R\in (0, 1]$ and $\varepsilon\in(0, R]$.
  \end{lem}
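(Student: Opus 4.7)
The plan is to combine the H\"older-type estimate \eqref{V_est:eq} for $V$ with the Lipschitz-type inequality \eqref{Lp:eq} for $a$, and then carry out an elementary one-variable integral. This lemma is the simplest of the three pieces in \eqref{splitCD:eq} because the derivative has been placed on the cut-off $\zeta$ rather than on $V$, so no estimates involving $X$ or $Y$ are needed.

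First I would note that because $\zeta'$ is supported in $(-1,1)$ and $R\le 1$, the effective region of integration lies in $\xi_1\in (-1,1)$ and $\xi_2\in (-2,2)$, which is exactly where \eqref{Lp:eq} applies. By \eqref{V_est:eq},
\begin{equation*}
|V(a(\xi_1), a(\xi_2); f)|\le C_\g \1 f\1_1 |a(\xi_1)-a(\xi_2)|^\g,
\end{equation*}
and raising \eqref{Lp:eq} to the power $\g$ gives
\begin{equation*}
|a(\xi_1)-a(\xi_2)|^\g \le A_{1,p}(a)^\g |\xi_1-\xi_2|^{(1-p^{-1})\g}.
\end{equation*}
Combined with the trivial estimate $|\zeta'(\xi_1)|\le \max|\zeta'|$, the integrand of $\CD_{\varepsilon, R}^{(2)}(a;\zeta, f)$ is dominated pointwise by
\begin{equation*}
C_\g \1 f\1_1 A_{1,p}(a)^\g \max|\zeta'|\,|\xi_1-\xi_2|^{(1-p^{-1})\g - 1}.
\end{equation*}

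Next I would integrate. Since $p>1$ and $\g>0$, the exponent $(1-p^{-1})\g - 1$ is strictly greater than $-1$, so the inner integral in $\xi_2$ over the annulus $\varepsilon<|\xi_1-\xi_2|<R$ converges absolutely and is bounded by $\frac{2}{(1-p^{-1})\g}\, R^{(1-p^{-1})\g}$, uniformly in $\xi_1$ and in $\varepsilon\in(0, R]$. The outer $\xi_1$-integral over $(-1,1)$ contributes only a bounded constant, which together with the previous line yields the claimed inequality \eqref{D2:eq} after absorbing the $p$-dependent factor $1/((1-p^{-1})\g)$ into $C_\g$.

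There is essentially no obstacle here; the lemma is a routine computation. The only point worth flagging is the strict positivity of $(1-p^{-1})\g$, which is why the hypothesis $p>1$ cannot be relaxed: the endpoint $p=1$ would make the $\xi_2$-integral logarithmically divergent at $\xi_1=\xi_2$.
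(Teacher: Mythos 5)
Your proof is correct and follows exactly the paper's own argument: combine \eqref{V_est:eq} with \eqref{Lp:eq} to bound the integrand pointwise by $C_\g \1 f\1_1 A_{1,p}(a)^\g \max|\zeta'|\,|\xi_1-\xi_2|^{(1-p^{-1})\g-1}$, then integrate the annulus. The only point worth noting, which the paper elides too, is that the factor $1/((1-p^{-1})\g)$ you absorb makes the constant depend on $p$ as well as $\g$; this is harmless given the paper's stated convention that constants may depend on ``other relevant parameters.''
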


 \begin{proof}
 By \eqref{V_est:eq} and \eqref{Lp:eq} we have: 
  	\begin{equation*}
  		|V\bigl(a(\xi_1), a(\xi_2); f\bigr)|
  		\le C\1 f\1_1 
  		A_{1, p}(a)^\g |\xi_1-\xi_2|^{(1-\frac{1}{p})\g}, 
  	\end{equation*}
  	so that \eqref{D2:eq} follows immediately. 
  \end{proof}
  
 For the next group of results we need to assume that $a$ 
 is a real-valued polynomial.   
    
  \begin{lem} 
Suppose that $\zeta\in\plainC\infty_0(-1, 1)$, and that $a$ is a real-valued 
polynomial. Then 
\begin{equation*}
	|\CD_{\varepsilon}^{(3)}(a; \zeta, f)|\le 
	C_{\g, \d} \|\z\|_{\plainC1} \1 f\1_2 \varepsilon^{(1-\frac{1}{p})\d}
	 A_{N, p}(a)^\gamma, 
		\end{equation*}
		for any $\d\in [0, \g)$, $p\in [1, \infty]$ and any $N\ge \g^{-1}+p^{-1}$,  
	uniformly in $\varepsilon\in(0, 1]$. The constant $C_{\g, \d}$ may depend on the 
	parameter $N$. 
\end{lem}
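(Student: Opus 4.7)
The strategy is to decompose $V$ using the identity $Y = f' + X$ (cf.~\eqref{Xdef:eq}), which via the fundamental theorem of calculus and the chain rule gives
\[
V(a(\xi\pm\varepsilon),a(\xi);f) = \bigl[f(a(\xi\pm\varepsilon))-f(a(\xi))\bigr] \pm \int_\xi^{\xi\pm\varepsilon} a'(\tau) X(a(\tau),a(\xi);f)\,d\tau.
\]
Substituted into the definition of $\CD_\varepsilon^{(3)}$ this yields a splitting $4\pi^2\varepsilon\CD_\varepsilon^{(3)} = A + B$, in which $A$ contains only $f\circ a$ while $B$ involves only the remainder $X$.

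For part $A$, shifting the integration variable in each of the two integrals combines them into $A=\int\bigl[2\zeta(\xi)-\zeta(\xi-\varepsilon)-\zeta(\xi+\varepsilon)\bigr]f(a(\xi))\,d\xi$. Expressing the second difference of $\zeta$ as $\int_0^\varepsilon\bigl[\zeta'(\xi+s-\varepsilon)-\zeta'(\xi+s)\bigr]ds$ and shifting once more to transfer the finite difference onto $f\circ a$, the H\"older bound \eqref{hol:eq} together with \eqref{Lp:eq} yields $|A|\le C\|\zeta\|_{\plainC{1}}\1 f\1_1 A_{1,p}(a)^\gamma \varepsilon^{1+(1-1/p)\gamma}$; dividing by $\varepsilon$ and noting $\varepsilon^{(1-1/p)\gamma}\le\varepsilon^{(1-1/p)\delta}$ for $\delta\le\gamma$, $\varepsilon\le 1$ gives the claimed bound.

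For part $B$, analogous shifts produce $B = B_1 + B_2$, with
\[
B_1 = \int \zeta(\xi)\int_{\xi-\varepsilon}^\xi a'(\tau)\bigl[X(a(\tau),a(\xi-\varepsilon);f) - X(a(\tau),a(\xi);f)\bigr]d\tau\,d\xi
\]
and $B_2$ carrying a $\zeta(\xi)-\zeta(\xi-\varepsilon)$ prefactor. Part $B_2$ is handled by the crude estimate \eqref{Xab:eq} with $\d=0$, namely $|X(s_1,s_2;f)|\le C\1 f\1_2|s_1-x_0|^{\gamma-1}$, together with $|\zeta(\xi)-\zeta(\xi-\varepsilon)|\le\varepsilon\|\zeta'\|_\infty$ and Lemma~\ref{zeros2:lem} at exponent~$\gamma$, which requires only the standing hypothesis $N\ge\gamma^{-1}+p^{-1}$.

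The main work is in $B_1$. Swap integration order so that $\xi\in[\tau,\tau+\varepsilon]$ for each fixed $\tau$, and bound the inner $\xi$-integral by $\varepsilon\|\zeta\|_\infty\sup_\xi|\Delta X|$ with $\Delta X = X(a(\tau),a(\xi-\varepsilon);f) - X(a(\tau),a(\xi);f)$. Writing $\Delta X = \int_0^1\bigl[f'(y_1(t))-f'(y_2(t))\bigr]dt$ with $y_{1,2}(t)=(1-t)a(\tau)+t\,a(\xi\mp\varepsilon)$ and invoking Lemma~\ref{fdash:lem} with exponent $\delta\in[0,\gamma)$, one obtains $|f'(y_1)-f'(y_2)|\le C\1 f\1_2 m_t^{\gamma-1-\delta}|y_1-y_2|^\delta$, where $m_t$ is the relevant minimum distance to $x_0$. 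In the regime where $a(\tau), a(\xi), a(\xi-\varepsilon)$ all lie on the same side of $x_0$, convexity yields $m_t\ge(1-t)|a(\tau)-x_0|$, producing
\[
|\Delta X|\le C_\delta\1 f\1_2 |a(\tau)-x_0|^{\gamma-1-\delta} A_{1,p}(a)^\delta \varepsilon^{(1-1/p)\delta},
\]
after which Lemma~\ref{zeros2:lem} applied at exponent $\gamma-\delta$ to $\int|a'(\tau)||a(\tau)-x_0|^{\gamma-1-\delta}d\tau$ closes the estimate. The chief difficulty is twofold: first, a delicate case analysis is required when $x_0$ is straddled by $\{a(\tau),a(\xi),a(\xi-\varepsilon)\}$, so that the convex combinations $y_{1,2}(t)$ may approach $x_0$ arbitrarily closely; second, applying Lemma~\ref{zeros2:lem} at exponent $\gamma-\delta$ nominally demands $N\ge(\gamma-\delta)^{-1}+p^{-1}$, strictly stronger than the standing assumption. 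Both issues are likely resolved by a dyadic decomposition of the $\tau$-integration according to distance from $x_0$, or, alternatively, by partitioning $[\xi-\varepsilon,\xi]$ into subintervals containing a controlled number of critical points of $a$ and invoking Lemma~\ref{zeros:lem} directly in place of Lemma~\ref{zeros2:lem}.
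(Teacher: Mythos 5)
Your overall strategy---apply the fundamental theorem of calculus to $V(a(\cdot),a(\xi);f)$ and split off the $f\circ a$ contribution so that only the remainder $X$ survives---is essentially the same device as the paper's, where one differentiates $\zeta(\xi\pm\nu)V(a(\xi\pm\nu),a(\xi);f)$ in $\nu$ and shifts, the $f'$ part of $Y=X+f'$ cancelling across the two signs. Your bound on $A$ is fine and corresponds to the paper's estimate of its $S_1^{(\pm)}$ terms via \eqref{V_est:eq}.

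The gap is in $B$, and it is self-inflicted. After shifting so that the cutoff is $\zeta(\xi)$, each of the two original summands in $B$ already has the form
\[
\int \zeta(\xi\mp\varepsilon)\int_{\xi}^{\xi\pm\varepsilon} a'(\tau)\,X\bigl(a(\tau),a(\xi);f\bigr)\,d\tau\,d\xi,
\]
in which $a'(\tau)$ is paired with $X$ whose \emph{first} slot is $a(\tau)$. This is exactly what \eqref{Xab:eq} and Lemma~\ref{zeros2:lem} are designed to handle: after Fubini, $|X(a(\tau),a(\xi);f)|\le C A_{1,p}(a)^{\d}|\tau-\xi|^{(1-1/p)\d}|a(\tau)-x_0|^{\g-1-\d}$, the inner $\xi$-integral produces $\varepsilon^{1+(1-1/p)\d}$, and \eqref{agamma3:eq} controls $\int|a'(\tau)||a(\tau)-x_0|^{\g-1-\d}d\tau$; dividing by $\varepsilon$ gives precisely \eqref{CD1:eq}. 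This is, up to relabelling, the paper's estimate of $S_2^{(\pm)}$. By instead adding and subtracting to form $B_1$, you replace the tractable single $X$ by the second-slot difference $X(a(\tau),a(\xi-\varepsilon);f)-X(a(\tau),a(\xi);f)$, which \eqref{Xab:eq} does not control and which leads to the straddling problem you describe. Neither of the two escape routes you propose (dyadic decomposition in $|a(\tau)-x_0|$, or a bespoke use of Lemma~\ref{zeros:lem}) is actually executed, so as written $B_1$ is not estimated. The remedy is simply to drop the $B_1+B_2$ rearrangement. (Your observation that Lemma~\ref{zeros2:lem} is invoked at exponent $\g-\d$, nominally needing $N\ge(\g-\d)^{-1}+p^{-1}$, is fair, but the same step occurs in the paper's estimate of $S_2^{(\pm)}$, so it is not a defect particular to your argument.)
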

  
  \begin{proof} Without loss of generality assume that $\|\z\|_{\plainC1}=1$.  
  	Represent:
  	\begin{align*}
  	\int	\zeta(\xi\pm\varepsilon)
  		V\bigl(a(\xi\pm \varepsilon), &\ a(\xi); f\bigr) d\xi\\
  = &\ \pm\int \int_0^\varepsilon \bigl[\zeta'(\xi\pm\nu) 
  V\bigl(a(\xi\pm \nu), a(\xi); f\bigr) \\[0.2cm]
  &\ \quad \quad\quad + \zeta(\xi\pm\nu) a'(\xi\pm \nu) Y
  \bigl(a(\xi\pm \nu), a(\xi); f\bigr)
  \bigr]d\nu d\xi\\[0.2cm]
= &\ \pm\int \int_0^\varepsilon \bigl[\zeta'(\xi) 
V\bigl(a(\xi), a(\xi\mp \nu); f\bigr) \\[0.2cm]
&\ \quad\quad\quad + \zeta(\xi) a'(\xi) Y
\bigl(a(\xi), a(\xi\mp\nu); f\bigr)
\bigr]d\nu d\xi,
  	\end{align*}
  	see \eqref{Ydef:eq} for the definition of the function $Y$. 
Let us simplify the formula for 
$\CD_\varepsilon^{(3)}$, introducing the integrals 
  	\begin{align*}
  	S_1^{(\pm)}
  	= &\ \frac{1}{\varepsilon}
  	\int  \zeta'(\xi) \int_0^\varepsilon
  	V\bigl(a(\xi), a(\xi\mp \nu); f\bigr) d\nu d\xi,\\[0.2cm]
  	S_2^{(\pm)}
  	= &\ \frac{1}{\varepsilon}\int  \zeta(\xi) \int_0^\varepsilon 
   a'(\xi) X\bigl(a(\xi), a(\xi\mp\nu); f\bigr)d\nu d\xi,
  	\end{align*}
  see \eqref{Xdef:eq} for the definition of $X$. Therefore
  \begin{equation*}
  4\pi^2 \CD_\varepsilon^{(3)}(a; \z, f)= S_1^{(+)} - S_1^{(-)} + S_2^{(+)}
  - S_2^{(-)}.
  \end{equation*}
  By \eqref{V_est:eq} and \eqref{Lp:eq},
\begin{align*}
|S_1^{(\pm)}|
\le &\ 
\frac{C}{\varepsilon} \1 f\1_1 \int |\z'(\xi)| \int_0^\varepsilon
|a(\xi) - a(\xi-\nu)|^\g d\nu d\xi\\[0.2cm]
\le &\ 
\frac{C}{\varepsilon} \1 f\1_1  A_{1, p}(a)^\g
\int_0^\varepsilon \nu^{(1-\frac{1}{p})\g} d\nu
\le 
C \1 f\1_1   A_{1, p}(a)^\g \varepsilon^{(1-\frac{1}{p})\g}.
\end{align*}  	
  	To estimate $S_2^{(\pm)}$ use \eqref{Xab:eq} with $\d\in [0, \gamma)$ 
  	and \eqref{Lp:eq} again: 
  	\begin{align*}
|X\bigl(a(\xi), a(\xi\mp\nu); f\bigr)|\le &\ 
C \1 f\1_2 |a(\xi)- a(\xi\mp\nu)|^{\d} |a(\xi)-x_0|^{\g-1-\d}\\[0.2cm]
\le  &\ C \1 f\1_2 A_{1, p}(a)^\d |\nu|^{(1-\frac{1}{p})\d} |a(\xi)-x_0|^{\g-1-\d}. 	
  	\end{align*}
  	Therefore 
\begin{align*}
	|S_2^{(\pm)}|
	\le \frac{C }{\varepsilon} A_{1, p}(a)^\d\1 f\1_2 
	\int_0^\varepsilon \nu^{(1-\frac{1}{p})\d} d\nu
	\int_{-1}^{1} |a'(\xi)| |a(\xi)-x_0|^{\g-1-\d}d\xi.
\end{align*}
By virtue of \eqref{agamma3:eq}, 
\begin{equation*}
|S_2^{(\pm)}|\le CA_{1, p}(a)^\d \1 f\1_2\ 
\varepsilon^{(1-\frac{1}{p})\d}
 A_{N, p}(a)^{\gamma-\d}.
\end{equation*}
Since $A_{1, p}(a)\le A_{N, p}(a)$, the required bound follows. 
\end{proof}

  \begin{lem}\label{D1:lem}
  		Suppose that $\zeta\in\plainC\infty_0(-1, 1)$ and that $a$ is a real-valued 
  		polynomial. 
  		Then 
  \begin{equation}\label{D1:eq}
  |\CD_{\varepsilon, R}^{(1)}(a; \z, f)|
  \le C_{\g, \d}\1 f\1_2 \max|\z|  R^{(1-\frac{1}{p})\d} A_{N, p}(a)^{\g},
  \end{equation}
  for any 
   $\d\in [0, \g)$, $p\in (1, \infty]$ and any $N\ge \g^{-1}+p^{-1}$, 
   uniformly in $R\in (0, 1]$ and $\varepsilon\in (0, R]$.   
  \end{lem}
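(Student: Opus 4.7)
The plan is to rewrite the derivative via \eqref{derV:eq} as
$\p_{\xi_1} V\bigl(a(\xi_1), a(\xi_2); f\bigr) = a'(\xi_1) Y\bigl(a(\xi_1), a(\xi_2); f\bigr)$,
and then to split $Y = f'\bigl(a(\xi_1)\bigr) + X\bigl(a(\xi_1), a(\xi_2); f\bigr)$
according to definition \eqref{Xdef:eq}. This decomposes
$\CD_{\varepsilon, R}^{(1)}(a; \z, f)$ into a piece driven by $f' \circ a$ and a piece driven
by $X$, which I would handle separately.

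For the $f' \circ a$ piece the integrand factorizes as
$\z(\xi_1) a'(\xi_1) f'(a(\xi_1))/(\xi_1 - \xi_2)$, with only the denominator depending on
$\xi_2$. Since the $\xi_2$-domain $\{\xi_2 : \varepsilon < |\xi_1 - \xi_2| < R\}$ is symmetric
about $\xi_2 = \xi_1$ and $(\xi_1 - \xi_2)^{-1}$ is odd about that point, the inner
$\xi_2$-integration yields zero. This principal-value cancellation is what prevents a
logarithmic blow-up as $\varepsilon \to 0$ and is the analytic heart of the argument.

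For the remaining $X$ piece, I would invoke \eqref{Xab:eq} with a parameter $\d \in (0, \g)$
to dominate $|X|$ by $C_{\g, \d} \1 f\1_2 |a(\xi_1) - a(\xi_2)|^\d |a(\xi_1) - x_0|^{\g - 1 - \d}$,
and then use the H\"older-type bound \eqref{Lp:eq} to replace $|a(\xi_1) - a(\xi_2)|^\d$ by
$A_{1, p}(a)^\d |\xi_1 - \xi_2|^{(1-1/p)\d}$. Performing the $\xi_2$-integration first produces
$\int_{\varepsilon < |u| < R} |u|^{(1-1/p)\d - 1} du \le 2 R^{(1-1/p)\d}/((1-1/p)\d)$, which is
uniform in $\varepsilon \in (0, R]$. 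The remaining $\xi_1$-integral over $\supp \z$ is
controlled by estimate \eqref{agamma3:eq} of Lemma \ref{zeros2:lem}, applied to the polynomial
$a - x_0$ (whose derivative is $a'$) with exponent $\g - \d$ in place of $\g$, giving
$\int |a'(\xi_1)| |a(\xi_1) - x_0|^{\g - 1 - \d} d\xi_1 \le C_\g (N+1)^2 A_{N, p}(a)^{\g - \d}$.
Multiplying these factors and using $A_{1, p}(a) \le A_{N, p}(a)$ collapses the exponents of
$A_{N, p}(a)$ to $\g$, producing \eqref{D1:eq}.

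The main obstacle is parameter book-keeping: in order to invoke Lemma \ref{zeros2:lem} with
exponent $\g - \d$ one needs $(\g - \d)^{-1} + p^{-1} \le N$, which restricts $\d$ to a
sub-interval of $(0, \g)$ depending on $N$. The standing hypothesis $N \ge \g^{-1} + p^{-1}$
makes this sub-interval non-empty for $\d$ sufficiently small, and since the constant
$C_{\g, \d}$ is allowed to depend on $N$ the final statement is obtained over the admissible
range of $\d$. A secondary subtlety is ensuring that the cancellation in the $f' \circ a$ piece
is performed before any absolute-value bound is taken, which is why the decomposition
$Y = f'(a(\xi_1)) + X$ must be done at the integrand level rather than after estimating $|Y|$.
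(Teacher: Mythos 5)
Your proposal reproduces the paper's proof essentially step for step: the same decomposition $Y\bigl(a(\xi_1),a(\xi_2);f\bigr)=f'\bigl(a(\xi_1)\bigr)+X\bigl(a(\xi_1),a(\xi_2);f\bigr)$ via \eqref{Xdef:eq}, the same cancellation of the $f'\circ a$ term by oddness of $(\xi_1-\xi_2)^{-1}$ over the symmetric annulus $\varepsilon<|\xi_1-\xi_2|<R$, the same use of \eqref{Xab:eq} together with \eqref{Lp:eq}, the same $\xi_2$-integration producing $R^{(1-1/p)\d}$, and the same appeal to \eqref{agamma3:eq} for the $\xi_1$-integral followed by $A_{1,p}\le A_{N,p}$. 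One remark: your parameter-bookkeeping observation is correct and is in fact glossed over in the paper's own proof — applying Lemma \ref{zeros2:lem} with exponent $\g-\d$ requires $N\ge(\g-\d)^{-1}+p^{-1}$, which is stronger than the stated hypothesis $N\ge\g^{-1}+p^{-1}$ when $\d$ is close to $\g$; this restricts the admissible $\d$ (and also rules out $\d=0$, needed anyway for the $\xi_2$-integral to converge), but since downstream applications only use small $\d$ this does not affect the results.
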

  
  \begin{proof} Without loss of generality assume that $\max|\z|=1$.  
  	Since 
  	\begin{equation*}
  	 \underset{\varepsilon<|\xi_1-\xi_2|< R}\iint
  		\frac{1}{\xi_1-\xi_2} \z(\xi_1)a'(\xi_1) g'\bigl(a(\xi_1)\bigr)
  		d\xi_1 d\xi_2 = 0,
  	\end{equation*}
  	the integral $\CD_{\varepsilon, R}^{(1)}$ can be rewritten as 
  	\begin{equation*}
  		\underset{\varepsilon<|\xi_1-\xi_2|< R}\iint
  		\frac{1}{\xi_1-\xi_2} \z(\xi_1)a'(\xi_1) X\bigl(a(\xi_1), a(\xi_2); f\bigr) d\xi_1 d\xi_2,  
  	\end{equation*}
  	see \eqref{Xdef:eq} for the definition of the function $X$.
  	
  	By virtue of \eqref{Xab:eq} and \eqref{Lp:eq}, 
  	for any $\d\in [0, \g)$ the integrand is bounded from above by 
  	\begin{align*}
  	C_{\g, \d}\1 f\1_2 &\ \frac{|a(\xi_1)-a(\xi_2)|^\d}{|\xi_1-\xi_2|}
  	|a'(\xi_1)|
  	|a(\xi_1) - x_0|^{\g-1-\d}\\[0.2cm]
  	\le &\ 
  	C_{\g, \d}\1 f\1_2  A_{1, p}(a)^{\d}|\xi_1-\xi_2|^{(1-p^{-1})\d-1}
  	|a'(\xi_1)|
  	|a(\xi_1) - x_0|^{\g-1-\d},
  	\end{align*}
  	for all $\xi_1$ where $a(\xi_1)\not = x_0$.
  	Assuming that $\d >0$, and using \eqref{agamma3:eq}, 
  	we obtain that 
  	\begin{equation*}
  	|\CD_{\varepsilon, R}^{(1)}(a; \z, f)|
  	\le C_{\g, \d}\1 f\1_2  R^{(1-\frac{1}{p})\d}
  	A_{1, p}(a)^\d A_{N, p}(a)^{\g-\d}.
  	\end{equation*}
  	As $A_{1, p}\le A_{N, p}$, the bound \eqref{D1:eq} follows. 
  \end{proof}

 \begin{proof}[Proof of Theorem \ref{CD:thm}]

 Collecting the bounds established in 
Lemmas \ref{D2:lem}-\ref{D1:lem}, and using the representation 
\eqref{splitCD:eq}, we arrive at the bound 
 \eqref{CD1:eq} for a polynomial $a$. 

For an arbitrary function $a\in\plainW{N}{p}(-2,2)$, $p\in (1, \infty]$, and a number 
$q \le p$, $1<q<\infty$, find a polynomial $\tilde a = \tilde a_\varepsilon$,  
such that 
\begin{equation}\label{polyapprox:eq}
	\|a-\tilde a\|_{\plainW{N}{q}} < A_{1, p}(a)R^{\g^{-1}}\varepsilon^{4\g^{-1}}.
\end{equation}  
This implies that 
\begin{equation}\label{ANq:eq}
A_{N, q}(\tilde a)\le A_{N, q}(a) + A_{1, p}(a)\le A_{N, p}(a)
\bigl(
4^{\frac{1}{q}-\frac{1}{p}} + 1
\bigr).	
\end{equation}
For subsequent calculations we assume without loss of generality that $\1 f\1_2 = 1$ and 
$\|\z\|_{\plainC1} = 1$. 
In view of \eqref{approx:eq}, for any $\mu\in (0, 1)$ we have
\begin{align*}
\bigl|V(a(\xi_1), a(\xi_2); f) - &\ V(\tilde a(\xi_1), \tilde a(\xi_2); f)\bigr|\\[0.2cm]
\le &\ C |\log\mu|\bigl( 
|a(\xi_1)-\tilde a(\xi_1)|^\g + |a(\xi_2) - \tilde a(\xi_2)|^\g\bigr)\\[0.2cm]
&\ + C \mu^\g \bigl( A_{1, q}(\tilde a)^\g |\xi_1-\xi_2|^{(1-\frac{1}{q})\g}
+ A_{1, p}(a)^\g |\xi_1-\xi_2|^{(1-\frac{1}{p})\g}
\bigr),
\end{align*}
where we have also used \eqref{Lp:eq}. 
Consequently,
\begin{align*}
	|\CD_{\varepsilon, R}(a; \zeta, f)
	- &\ \CD_{\varepsilon, R}(\tilde a; \zeta, f)|\\[0.2cm]
	\le &\ \frac{C}{\varepsilon^2} 
	\bigl[|\log\mu| \|a-\tilde a\|_{\plainL{q}}^\g 
	+ \mu^\g A_{1, p}(a)^\g R
	\bigr]\\[0.2cm]
	\le &\ \frac{C}{\varepsilon^2}  R A_{1, p}(a)^\g
	\bigl(|\log\mu|  \varepsilon^4 + \mu^\g \bigr),
	\end{align*}
	where we have used \eqref{polyapprox:eq}. 
Take $\mu = \varepsilon^{3\g^{-1}}$, so that 
 	 \begin{equation}\label{CDdiff:eq}
 	 	|\CD_{\varepsilon, R}(a; \zeta, f)
 	 	-  \CD_{\varepsilon, R}(\tilde a; \zeta, f)|
 	 	\le C  R  A_{1, p}(a)^\g \varepsilon. 
 	 \end{equation}
 	 Let $\tilde\d$ be given by 
 	  	 \begin{equation*}
 	  	 	\tilde\d = \d\  \frac{1-p^{-1}}{1-q^{-1}},
 	  	 \end{equation*}
 	  	 where $\d\in (0, \g)$. 
 	  	 By picking a suitable $q$ one ensures that $\tilde\d < \g$ 
 	  	 as well. 
 	  	 Now use Theorem \ref{CD:thm} for the polynomial $\tilde a$ with the parameter $\tilde \d$ 
 	  	 instead of $\d$, remembering \eqref{ANq:eq}:
 	  	 \begin{equation*}
 	  	 |\CD_{\varepsilon, R}(\tilde a; \z, f)|\le 
 	  	 C  R^{(1-\frac{1}{q})\tilde\d}A_{N, q}(\tilde a)^\g
 	  	 \le  C  R^{(1-\frac{1}{p})\d}A_{N, p}(a)^\g.
 	  	 \end{equation*}
 	  	 Combining this bound with \eqref{CDdiff:eq} we obtain \eqref{CD1:eq}.
 	  
 	 Finally, the existence of the limit
 	 \begin{equation*}
 	 	\underset{\varepsilon\to 0}\lim \CD_{\varepsilon, R}(a; \z, f) 
 	 \end{equation*}
follows from the fact that the right-hand side of \eqref{CD1:eq} tends to zero as 
$R\to 0, \varepsilon\to 0$.  	 
 	 \end{proof} 
    
\begin{proof}[Proof of Theorem \ref{bbound:thm}] 
	Let $\zeta_k\in\plainC\infty_0(\R)$, $k\in\Z$, be a family of functions 
	constituting a partition of unity subordinate to the covering of the 
	real axis by intervals $(k-1, k+1), k\in\Z$. We may assume that 
	the norms $\|\zeta_k\|_{\plainC1}$ are bounded uniformly in $k\in\Z$. 
	Represent $\CB_\varepsilon(a; f)$ as 
	\begin{equation*}
	\CB_{\varepsilon}(a; f)= \CB_1(a; f) + \sum_{k\in\Z} \CD_{\varepsilon, 1}(a; \z_k, f).
	\end{equation*}
	The first term on the right-hand side is estimated by Lemma \ref{B1:lem}. 
	Due to the bound \eqref{CD1:eq} the second term is 
	bounded by $C \1 f\1_2 \SN(a'; \plainl{\g}(\plainW{N-1}{p}))^\g$.
	Furthermore, since the $\SN$-(quasi)-norm is finite, the sum has a limit as $\varepsilon\to 0$.
This completes the proof. 
\end{proof}
    
\section{A special case}

In the previous Section, in the proof of Theorem \ref{bbound:thm}, we use 
the covering of the real axis by intervals $(k-1, k+1), k\in\Z$ that obviously all have 
length $2$. Now we derive an estimate for $\CB(a; f)$ using 
a covering by intervals whose size is sensitive to the 
rate of change of the function $a$.  
Let us describe in more precise terms the conditions on $a$.  
Let $\tau:\R\to\R$ be a positive function satisfying the condition 
\begin{equation}\label{Lip:eq}
|\tau(\xi) - \tau(\eta)| \le \nu |\xi-\eta|,\ \ \forall\xi, \eta\in\R, 
\end{equation}
with some $\nu \in (0, 1)$. 
It is straightforward to check that 
\begin{equation}\label{dve:eq}
(1+\nu)^{-1}\le \frac{\tau(\xi)}{\tau(\eta)} \le (1-\nu)^{-1},\ \ 
\forall \eta\in J(\xi) = \bigl(\xi-\tau(\xi), \xi+\tau(\xi)\bigr).
\end{equation}
We call $\tau$ \textit{the scale function}. 
Let $v:\R\to\R$ be another continuous positive function such that 
\begin{equation}\label{w:eq}
C_1 \le \frac{v(\eta)}{v(\xi)}\le C_2,\ \forall \eta\in J(\xi),
\end{equation}
with some positive constants $C_1, C_2$ independent 
of $\xi$ and $\eta$.  
We call $v$ \textit{the amplitude function}. 
Since $\nu < 1$, one can construct a covering of $\R$ by open intervals $J(\xi_j)$  
centred at some points $\xi_j, j\in\Z$, which satisfies 
\textit{the finite intersection property,} i.e.  the number of intersecting intervals 
is bounded from above  
by a constant depending only on the parameter $\nu$, see \cite{Hor}, Chapter 1. 
Moreover, there exists a partition of unity $\phi_j\in\plainC\infty_0(\R)$ 
subordinate to the 
above covering such that 
\begin{equation}\label{partition:eq}
|\phi_j^{(k)}(\xi)|\le C_k \tau(\xi)^{-k},\ k = 0, 1, \dots, 
\end{equation}
with some constants $C_k$ independent of $j \in\Z$.

It is convenient for us to use a covering with finite intersection property, 
constructed with the help of the function $\tau/2$ instead of $\tau$ itself. 
Let
\begin{equation*}
	I_j = \biggl(\eta_j - \frac{\tau_j}{2},
	\eta_j+ \frac{\tau_j}{2}
	\biggr),\ \ \tau_j = \tau(\eta_j), j\in\Z,
\end{equation*}
be intervals forming such a covering, and let $\phi_j\in\plainC\infty_0(\R), j\in\Z$, be 
a subordinate partition of unity satisfying \eqref{partition:eq}.

Consider a symbol $a\in \plainC{N}(\R)$, satisfying the 
bounds
\begin{equation}\label{scales:eq}
|a(\xi)-a_0|\le Cv(\xi),\ \ 
| a^{(k)}(\xi)|\le C_k \tau(\xi)^{-k} v(\xi),\ k = 1, 2, \dots, N,
\end{equation}
with some functions $\tau$ and  $v$ described above, and with some constant 
$a_0$.

In all the bounds below the constants are independent of the functions 
$f$, $\tau$ and $v$, but 
may depend on the parameter $\nu$ and the 
constants in \eqref{w:eq} and \eqref{scales:eq}. 

\begin{thm}\label{coeffscales:thm} 
	Suppose that $f$ satisfies Condition \ref{f:cond} with $n = 2$ and $\gamma\in (0, 1]$. 
	Let $\tau, v, a$ satisfy \eqref{Lip:eq}, \eqref{w:eq} and let 
	$a$ satisfy 
\eqref{scales:eq} with some $N\ge \g^{-1}$. Then 
\begin{equation}\label{coeffscales:eq}
|\CB(a; f)|\le C_\g\1 f\1_2 \int \frac{v(\xi)^\g}{\tau(\xi)} d\xi.
\end{equation}
\end{thm}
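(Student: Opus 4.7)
The proof parallels that of Theorem~\ref{bbound:thm}, but with the uniform unit-length covering of $\R$ replaced by the $\tau$-adapted covering $\{I_j\}$ and the subordinate partition of unity $\{\phi_j\}$ introduced just before the statement. The structural observation that drives the argument is that the integrand defining $\CB$ is invariant under the affine rescaling $\xi=\eta_j+\tau_j\eta$: the Jacobian $\tau_j^2$ exactly cancels the $\tau_j^2$ produced by the denominator $|\xi_1-\xi_2|^2$. This reduces the local problem on each patch $I_j$ to a unit-interval estimate amenable to Theorem~\ref{CD:thm} with $p=\infty$.

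The plan is to insert $\sum_j\phi_j(\xi_1)=1$ into \eqref{cb2_pv:eq}, writing $\CB_\varepsilon(a;f)=\sum_j \CF^{(j)}_\varepsilon$ with $\CF^{(j)}_\varepsilon$ carrying the cut-off $\phi_j(\xi_1)$, and then to split each $\CF^{(j)}_\varepsilon$ into a diagonal piece over $\varepsilon<|\xi_1-\xi_2|<\tau_j$ and an off-diagonal piece over $|\xi_1-\xi_2|\ge\tau_j$. On the diagonal piece the change of variables $\xi_i=\eta_j+\tau_j\eta_i$ rewrites the expression as $\CD_{\varepsilon/\tau_j,1}(\tilde a_j;\tilde\phi_j,f)$, where $\tilde a_j(\eta):=a(\eta_j+\tau_j\eta)$ and $\tilde\phi_j(\eta):=\phi_j(\eta_j+\tau_j\eta)$. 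Hypothesis \eqref{scales:eq} together with the comparabilities from \eqref{dve:eq} and \eqref{w:eq} gives $\|\tilde a_j'\|_{\plainW{N-1}{\infty}(-2,2)}\le Cv_j$ uniformly in $j$, where $v_j:=v(\eta_j)$, while \eqref{partition:eq} gives $\|\tilde\phi_j\|_{\plainC{1}}\le C$. Theorem~\ref{CD:thm} with $p=\infty$, $R=1$ and some $\d\in(0,\g)$ then bounds the diagonal piece by $C_\g\1 f\1_2\, v_j^\g$, with the limit $\varepsilon\to 0$ existing. The off-diagonal piece is estimated directly from \eqref{V_est:eq} together with $|a(\xi)-a_0|^\g\le Cv(\xi)^\g$ (the $k=0$ case of \eqref{scales:eq}), giving the pointwise bound $C\1 f\1_1(v(\xi_1)^\g+v(\xi_2)^\g)/|\xi_1-\xi_2|^2$ on its integrand.

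Summing over $j$ then uses the finite intersection property of $\{I_j\}$ together with $v(\xi)\sim v_j$ and $\tau(\xi)\sim\tau_j$ on $I_j$. The diagonal contribution is controlled by
\[
\sum_j v_j^\g
\le C\sum_j\int_{I_j}\frac{v(\xi)^\g}{\tau(\xi)}\,d\xi
\le CM\int\frac{v(\xi)^\g}{\tau(\xi)}\,d\xi,
\]
where $M$ is the intersection multiplicity. For the off-diagonal contribution, the constraint $|\xi_1-\xi_2|\ge\tau_j$ on $\supp\phi_j$ translates, after summing in $j$, to $|\xi_1-\xi_2|\gtrsim\tau(\xi_1)$; the $v(\xi_1)^\g$ term then integrates in $\xi_2$ to $C\int v(\xi_1)^\g/\tau(\xi_1)\,d\xi_1$, while the $v(\xi_2)^\g$ term is handled by the observation that \eqref{Lip:eq} upgrades $|\xi_1-\xi_2|\gtrsim\tau(\xi_1)$ to $|\xi_1-\xi_2|\gtrsim\tau(\xi_2)$, after which the order of integration is swapped.

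The principal technical obstacle is that Theorem~\ref{CD:thm} demands control of $\tilde a_j$ on the enlarged interval $(-2,2)$, i.e.\ of $a$ on $(\eta_j-2\tau_j,\eta_j+2\tau_j)$, and there one needs $\tau(\xi)\sim\tau_j$ and $v(\xi)\sim v_j$. The Lipschitz bound \eqref{Lip:eq} only gives $\tau(\xi)\ge(1-2\nu)\tau_j$ on that set, which is strictly positive exclusively when $\nu<1/2$. This is circumvented by a preliminary reduction: replace $\tau$ by $c\tau$ for a small enough absolute constant $c>0$ and rebuild the covering with this finer scale; the hypotheses \eqref{Lip:eq}, \eqref{w:eq} and \eqref{scales:eq} persist for $c\tau$ with modified constants, and the final bound \eqref{coeffscales:eq} is only worsened by a multiplicative factor of $c^{-1}$.
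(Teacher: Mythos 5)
Your proposal is correct and follows essentially the same route as the paper: insert the $\tau$-adapted partition of unity into \eqref{cb2_pv:eq}, split each piece at $|\xi_1-\xi_2|\sim\tau_j$, estimate the off-diagonal part pointwise using \eqref{V_est:eq} together with the Lipschitz condition \eqref{Lip:eq} to force $|\xi_1-\xi_2|\gtrsim\tau(\xi_1)+\tau(\xi_2)$, rescale the diagonal part to reduce it to Theorem~\ref{CD:thm} with $p=\infty$, and sum using finite intersection. The one point worth flagging is that the ``technical obstacle'' you raise --- that Theorem~\ref{CD:thm} demands control of $\tilde a_j$ on $(-2,2)$, so that after rescaling by $\tau_j$ one would need comparability of $\tau$ and $v$ on $(\eta_j-2\tau_j,\eta_j+2\tau_j)$ --- is handled by the paper \emph{before} the proof even begins: the covering $\{I_j\}$ is explicitly built with the scale function $\tau/2$ (half-width $\tau_j/2$) rather than $\tau$, so the rescaling is by $R_j=\tau_j/2$ and the enlarged window $|\eta|\le 2$ maps back onto $J(\eta_j)=(\eta_j-\tau_j,\eta_j+\tau_j)$, precisely the set on which \eqref{dve:eq} and \eqref{w:eq} are stated to hold for any $\nu\in(0,1)$. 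Your fix of rescaling $\tau$ by a small constant $c$ is thus essentially the paper's $c=1/2$, just done ad hoc rather than folded into the setup.
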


A similar bound holds also for functions $f$ with higher smoothness. 

\begin{thm}\label{coeffscales1:thm}
Suppose that $f:\mathbb C\to\mathbb C$ is a function such that $f'$ 
is $\varkappa$-H\"older continuous with some $\varkappa\in (0, 1]$. 
Let $\tau, v, a$ satisfy \eqref{Lip:eq}, \eqref{w:eq} and let 
	$a$ satisfy 
\eqref{scales:eq} with $N = 1$. Then 
\begin{equation}\label{coeffscales1:eq}
|\CB(a; f)|\le C_\varkappa
\| f'\|_{\plainC{0, \varkappa}} \int \frac{v(\xi)^{1+\varkappa}}{\tau(\xi)} d\xi.
\end{equation}
\end{thm}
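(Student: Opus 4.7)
The plan is to reduce Theorem \ref{coeffscales1:thm} to a direct pointwise estimate of the Gagliardo--Slobodetski seminorm
\begin{equation*}
G := \iint \frac{|a(\xi_1)-a(\xi_2)|^{1+\varkappa}}{|\xi_1-\xi_2|^2}\, d\xi_1\, d\xi_2,
\end{equation*}
by invoking \eqref{gagliardo:eq}, which is available precisely because $f'$ is $\varkappa$-H\"older continuous. After this reduction, what remains is an entirely elementary calculation that uses only the hypotheses \eqref{Lip:eq}, \eqref{w:eq}, and \eqref{scales:eq}; none of the partition-of-unity machinery developed for Theorem \ref{bbound:thm} is needed here, because the extra smoothness of $f$ already absorbs the singular issues that forced us to introduce it in the non-smooth setting.

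First I would split the domain into the near region $\{|\xi_1-\xi_2|\le \tau(\xi_1)\}$ and the far region $\{|\xi_1-\xi_2|> \tau(\xi_1)\}$. On the near region, the segment joining $\xi_1$ and $\xi_2$ lies in $J(\xi_1)$, so by \eqref{dve:eq} and \eqref{w:eq} both $\tau$ and $v$ vary only by bounded factors along it. The mean value theorem together with the derivative bound in \eqref{scales:eq} then yields
\begin{equation*}
|a(\xi_1)-a(\xi_2)| \le C\, \tau(\xi_1)^{-1} v(\xi_1)\, |\xi_1-\xi_2|.
\end{equation*}
Substituting into $G$ and performing the inner integral $\int_{|\xi_2-\xi_1|\le \tau(\xi_1)} |\xi_1-\xi_2|^{-1+\varkappa} d\xi_2 = 2\varkappa^{-1}\tau(\xi_1)^{\varkappa}$ produces a contribution bounded by $C_\varkappa \int v(\xi)^{1+\varkappa}/\tau(\xi)\, d\xi$.

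On the far region I would instead use the crude pointwise bound $|a(\xi_1)-a(\xi_2)|\le C(v(\xi_1)+v(\xi_2))$ from the first inequality in \eqref{scales:eq}, raise to the power $1+\varkappa$, and split the result into two pieces. The piece containing $v(\xi_1)^{1+\varkappa}$ is immediate from $\int_{|\xi_1-\xi_2|>\tau(\xi_1)}|\xi_1-\xi_2|^{-2}\, d\xi_2 = 2/\tau(\xi_1)$. The main obstacle is the companion piece containing $v(\xi_2)^{1+\varkappa}$: the cut-off is phrased in terms of $\tau(\xi_1)$ rather than $\tau(\xi_2)$, so Fubini cannot be applied naively. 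I would resolve this by invoking \eqref{Lip:eq}, which forces $\tau(\xi_2) \le \tau(\xi_1) + \nu|\xi_1-\xi_2| \le (1+\nu)|\xi_1-\xi_2|$ on this region. Thus the far region is contained in $\{|\xi_1-\xi_2|>\tau(\xi_2)/(1+\nu)\}$, and integrating in $\xi_1$ first gives the matching bound. Adding the near and far contributions and feeding the result into \eqref{gagliardo:eq} completes \eqref{coeffscales1:eq}.
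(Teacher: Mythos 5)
Your proof is correct, and it takes a genuinely more elementary route than the paper's. The paper first invokes \eqref{gagliardo:eq} (just as you do) but then recycles the partition-of-unity machinery built for Theorem~\ref{bbound:thm}: it decomposes $\CB_\varepsilon$ as $\sum_j \CD_{\varepsilon,\infty}(a;\phi_j,f)$, splits each term at radius $R_j=\tau_j/2$, handles the outer part by the same kind of estimate as Lemma~\ref{outside:lem}, and handles the inner part by the mean value bound $|a(\xi_1)-a(\xi_2)|\le Cv(\eta_j)\tau_j^{-1}|\xi_1-\xi_2|$ on $I_j\times\{|\xi_1-\xi_2|<R_j\}$. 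You bypass the partition of unity entirely by partitioning the $(\xi_1,\xi_2)$-plane directly into $\{|\xi_1-\xi_2|\lessgtr\tau(\xi_1)\}$ and performing the $\xi_2$-integral explicitly; this is the same near/far dichotomy, expressed globally rather than piece-by-piece. Your approach is shorter, avoids the implicit reliance on the finite intersection property and on \eqref{partition:eq}, and — as you observe — also makes transparent why the machinery is superfluous once $f'$ is Hölder continuous: the bound $|U(s_1,s_2;f)|\le C\1 f'\1_{\plainC{0,\varkappa}}|s_1-s_2|^{1+\varkappa}$ from the Lemma preceding Theorem~\ref{redV:thm} gives an absolutely convergent integrand, so no regularization by $\varepsilon$ and $R$, and hence no $\CD_{\varepsilon,R}$-apparatus, is needed. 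Two small points worth making explicit in a final write-up: in the far region you need the elementary convexity inequality $(x+y)^{1+\varkappa}\le 2^\varkappa(x^{1+\varkappa}+y^{1+\varkappa})$ before Fubini, and in the near region the chain $|a'(\eta)|\le C\tau(\eta)^{-1}v(\eta)\le C'\tau(\xi_1)^{-1}v(\xi_1)$ for $\eta$ on the segment $[\xi_1,\xi_2]$ uses \eqref{dve:eq} to control $\tau(\eta)^{-1}$ from above (not just \eqref{w:eq} for $v$); both are fine, but they should be stated.
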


First we give a detailed proof of Theorem \ref{coeffscales:thm}. 

Represent $\CB(a; f)$ as follows:
\begin{equation}\label{scales_rep:eq}
\CB_\varepsilon(a; f) = \sum_{j\in\mathbb Z} 
\CD_{\varepsilon, \infty}(a; \phi_j, f),
\end{equation}
see \eqref{CD:eq} for the definition of $\CD_{\varepsilon, R}(\cdots)$.  
Split each summand into two components: 
\begin{equation*}
\CD_{\varepsilon, \infty}(a; \phi_j, f)
= 
\CD_{\varepsilon, R_j}(a; \phi_j, f) 
+ \CD_{R_j, \infty}(a; \phi_j, f),\ \ \ R_j = \frac{\tau_j}{2}.
\end{equation*} 
 
\begin{lem} \label{outside:lem}
Suppose that the scaling function $\tau$ 
satisfies \eqref{Lip:eq} with some $\nu\in (0, 1)$. 
If $f$ satisfies the conditions of Theorem \ref{coeffscales:thm}, then 
\begin{equation}\label{outside:eq}
\sum_{j\in\Z}|\CD_{R_j, \infty}(a; \phi_j, f)|\le 
C\1 f\1_1 \int \frac{|a(\xi)-a_0|^\g}{\tau(\xi)} d\xi.
\end{equation}
\end{lem}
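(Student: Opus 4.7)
The natural starting point is the pointwise estimate
$|V(a(\xi_1), a(\xi_2); f)| \le C_\g \1 f\1_1 |a(\xi_1)-a(\xi_2)|^\g$ from \eqref{V_est:eq},
combined with the subadditivity inequality
$|a(\xi_1)-a(\xi_2)|^\g \le A(\xi_1) + A(\xi_2)$, valid for $\g\in (0, 1]$,
where $A(\xi) := |a(\xi)-a_0|^\g$.
Inserting this into \eqref{CD:eq} splits
$|\CD_{R_j, \infty}(a; \phi_j, f)|$ into two pieces: one carrying the factor $A(\xi_1)$
(the variable localised by $\phi_j$) and one carrying $A(\xi_2)$ (the ``outer'' variable).
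These would be treated separately.

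The $A(\xi_1)$ piece is direct:
$\int_{|\xi_1-\xi_2|>R_j} |\xi_1-\xi_2|^{-2}\, d\xi_2 = 2/R_j$, and since
$\supp \phi_j \subset I_j$, the Lipschitz condition \eqref{Lip:eq}
together with \eqref{dve:eq} gives $\tau(\xi_1) \asymp \tau_j = 2R_j$ uniformly on
$\supp \phi_j$, so $1/R_j$ can be replaced by $C/\tau(\xi_1)$. Summation in $j$, using the
finite intersection property $\sum_j \phi_j(\xi_1) \le K$, then produces precisely the
right-hand side of \eqref{outside:eq}.

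The remaining piece is the main obstacle, since $\phi_j$ and $A(\xi_2)$ depend on
different variables. After interchanging the order of integration, the task reduces to
proving the pointwise bound $\sum_{j\in\Z} K_j(\xi_2) \le C/\tau(\xi_2)$, where
$K_j(\xi_2) := \int_{|\xi_1-\xi_2|>R_j} \phi_j(\xi_1)|\xi_1-\xi_2|^{-2}\, d\xi_1$.
A short case analysis, depending on whether $|\xi_2-\eta_j|$ is smaller or larger than
$2R_j$, should give $K_j(\xi_2) \le C\tau_j/(\tau_j + |\xi_2-\eta_j|)^2$. Each such term
can then be dominated by $C\int_{I_j} (\tau(\xi)+|\xi_2-\xi|)^{-2}\, d\xi$,
using that on $I_j$ one has $\tau(\xi) \asymp \tau_j$ and
$\tau(\xi)+|\xi_2-\xi| \asymp \tau_j+|\xi_2-\eta_j|$.
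The finite intersection property then converts the resulting sum into a single integral,
and a last application of the Lipschitz bound
$\tau(\xi) + |\xi_2-\xi| \ge c(\tau(\xi_2)+|\xi_2-\xi|)$ reduces matters
to evaluating $\int (\tau(\xi_2)+|\xi_2-\xi|)^{-2}\, d\xi = 2/\tau(\xi_2)$.
The geometric summation in this last step, in which both the Lipschitz hypothesis on
$\tau$ and the finite intersection property enter in an essential way, is the hardest
point of the argument.
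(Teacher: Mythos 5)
Your proposal is correct, and the first half (the $A(\xi_1)$ piece) is essentially what the paper does, but your treatment of the $A(\xi_2)$ piece takes a genuinely different and more laborious route. The paper's key observation, which you miss, is that for $\xi_1\in\supp\phi_j\subset I_j$ and $|\xi_1-\xi_2|>R_j=\tau_j/2$, the Lipschitz condition \eqref{Lip:eq} forces $|\xi_1-\xi_2|>c_\nu\bigl(\tau(\xi_1)+\tau(\xi_2)\bigr)$ with $c_\nu = \tfrac{1}{4(1+\nu)}$. Thus one can enlarge the integration domain for every $j$ to the single $j$-independent set $\{|\xi_1-\xi_2|>c_\nu(\tau(\xi_1)+\tau(\xi_2))\}$, after which $\sum_j\phi_j(\xi_1)=1$ collapses the $j$-sum immediately; both the $|a(\xi_1)|^\g$ and $|a(\xi_2)|^\g$ pieces then reduce to the elementary one-dimensional integral $\int_{|\eta|>c_\nu\tau(\xi)}|\eta|^{-2}\,d\eta \asymp \tau(\xi)^{-1}$, and the claim follows in two lines. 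Your argument reaches the same conclusion, but by proving the pointwise kernel estimate
\begin{equation*}
\sum_{j} K_j(\xi_2) \le \frac{C}{\tau(\xi_2)}, \qquad K_j(\xi_2)=\int_{|\xi_1-\xi_2|>R_j}\phi_j(\xi_1)|\xi_1-\xi_2|^{-2}\,d\xi_1,
\end{equation*}
via the bound $K_j(\xi_2)\lesssim\tau_j/(\tau_j+|\xi_2-\eta_j|)^2$, conversion into $\int_{I_j}(\tau(\xi)+|\xi_2-\xi|)^{-2}d\xi$, the finite intersection property, and a final Lipschitz step. Each of these steps is sound and the estimate does hold, so your route would give a complete proof, but it requires more case analysis and invokes the finite intersection property in a place where the paper needs only $\sum_j\phi_j=1$. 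The moral: when you see a $j$-dependent cutoff $|\xi_1-\xi_2|>R_j$ together with a Lipschitz scale function, try first to trade it for a $j$-independent cutoff so the partition of unity disappears at once.
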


\begin{proof} 
Assume without loss of generality that $\1 f\1_1=1$ and $a_0= 0$. 
For all $\xi_1\in I_j$ 
we get from \eqref{Lip:eq} that
\begin{equation*}
\tau(\xi_1)\le \frac{2+\nu}{2}\tau_j.
\end{equation*}
Thus for all $\xi_2$ such that $|\xi_1-\xi_2|>\tau_j/2$ we have
\begin{align*}
	\tau(\xi_1)\le &\ (2+\nu)|\xi_1-\xi_2|,\\[0.2cm]
\tau(\xi_2)\le &\ \nu|\xi_1-\xi_2| + \tau(\xi_1)
\le 2(\nu+1)|\xi_1-\xi_2|,
\end{align*}
which leads to 
\begin{equation*}
c_\nu\bigl(\tau(\xi_1) + \tau(\xi_2)\bigr)\le |\xi_1-\xi_2|,
c_\nu = \frac{1}{4(1+\nu)}. 
\end{equation*}
Therefore
\begin{equation*}
|\CD_{R_j, \infty}(a; \phi_j, f)|\le 
\underset{|\xi_1-\xi_2|> c_\nu(\tau(\xi_1)+\tau(\xi_2))}\iint \phi_j(\xi_1)
\frac{|V(a(\xi_1), a(\xi_2); f)|}{|\xi_1-\xi_2|^2} d\xi_1 d\xi_2.
\end{equation*}
By \eqref{V_est:eq}, 
the right-hand side does not exceed
\begin{equation*}
C\underset{|\xi_1-\xi_2|> c_\nu\tau(\xi_1)}\iint \phi_j(\xi_1)
\frac{|a(\xi_1)|^\g}{|\xi_1-\xi_2|^2} d\xi_1 d\xi_2
+ C\underset{|\xi_1-\xi_2|> c_\nu\tau(\xi_2)}\iint \phi_j(\xi_1)
\frac{|a(\xi_2)|^\g}{|\xi_1-\xi_2|^2} d\xi_1 d\xi_2. 
\end{equation*}
Thus the sum over $j$ is bounded from above by
\begin{equation*}
C\underset{|\xi_1-\xi_2|> c_\nu\tau(\xi_1)}\iint 
\frac{|a(\xi_1)|^\g}{|\xi_1-\xi_2|^2} d\xi_1 d\xi_2
+ C\underset{|\xi_1-\xi_2|> c_\nu\tau(\xi_2)}\iint 
\frac{|a(\xi_2)|^\g}{|\xi_1-\xi_2|^2} d\xi_1 d\xi_2. 
\le C'\int \frac{|a(\xi)|^\g}{\tau(\xi)} d\xi,
\end{equation*}
as claimed. 
\end{proof}
 
\begin{lem}\label{inside:lem}
Let $a$ satisfy \eqref{scales:eq} 
with some functions $\tau = \tau(\xi)$ and $v = v(\xi)$ 
satisfying \eqref{Lip:eq} and \eqref{w:eq}. 
Suppose also that $N\ge \g^{-1}$ and $R\le R_j$. 
Then for any $\d\in [0, \g)$ the bound holds
\begin{equation}\label{inside:eq}
|\CD_{\varepsilon, R}(a; \phi_j, f)|	\le 
	C_\d\1 f\1_2 R^\d \int_{I_j}  
	\frac{v(\xi)^\g}{\tau(\xi)^{1+\d}} d\xi,
\end{equation}
uniformly in $\varepsilon\in (0, R]$.
\end{lem}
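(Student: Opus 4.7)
The plan is to reduce Lemma \ref{inside:lem} to an application of Theorem \ref{CD:thm} by rescaling the interval $I_j$ (of length $\tau_j$) to an interval of fixed length. First I would perform the change of variables $\xi_i = \eta_j + (\tau_j/2)\tilde\xi_i$, $i=1,2$, and introduce the rescaled functions $\tilde\phi(\tilde\xi) := \phi_j\bigl(\eta_j + (\tau_j/2)\tilde\xi\bigr)$ and $\tilde a(\tilde\xi) := a\bigl(\eta_j + (\tau_j/2)\tilde\xi\bigr)$. Since $V\bigl(\tilde a(\tilde\xi_1), \tilde a(\tilde\xi_2); f\bigr) = V\bigl(a(\xi_1), a(\xi_2); f\bigr)$ and the Jacobian $(\tau_j/2)^2$ exactly cancels the rescaling of $|\xi_1-\xi_2|^{-2}$, a direct computation yields the identity
\begin{equation*}
\CD_{\varepsilon, R}(a; \phi_j, f) = \CD_{\tilde\varepsilon, \tilde R}(\tilde a; \tilde\phi, f), \qquad \tilde\varepsilon = \frac{2\varepsilon}{\tau_j}, \quad \tilde R = \frac{2R}{\tau_j}.
\end{equation*}
The constraint $R\le R_j = \tau_j/2$ forces $\tilde R\le 1$ and $\tilde\varepsilon\le\tilde R$, placing us in the scope of Theorem \ref{CD:thm}.

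Next, I would verify that the rescaled data satisfy the hypotheses of Theorem \ref{CD:thm} with $p=\infty$, uniformly in $j$. The cutoff $\tilde\phi$ is supported in $(-1,1)$, and the identity $\tilde\phi'(\tilde\xi) = (\tau_j/2)\phi_j'(\xi)$ together with \eqref{partition:eq} and $\tau \asymp \tau_j$ on $I_j$ (from \eqref{Lip:eq}) gives $\|\tilde\phi\|_{\plainC1}\le C$ independent of $j$. For $\tilde a$, the preimage of $(-2,2)$ is the interval $J(\eta_j)=(\eta_j-\tau_j, \eta_j+\tau_j)$, on which \eqref{Lip:eq} gives $\tau(\xi)\asymp \tau_j$ and \eqref{w:eq} gives $v(\xi)\asymp v_j:=v(\eta_j)$. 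Combined with \eqref{scales:eq} and the chain rule identity $\tilde a^{(k)}(\tilde\xi) = (\tau_j/2)^k a^{(k)}(\xi)$, this yields
\begin{equation*}
\|\tilde a^{(k)}\|_{\plainL{\infty}(-2,2)}\le C_k v_j, \quad 1\le k\le N,
\end{equation*}
hence $A_{N,\infty}(\tilde a)\le C v_j$. Note that the hypothesis $N\ge \g^{-1}$ matches the requirement $N\ge \g^{-1}+p^{-1}$ of Theorem \ref{CD:thm} in the borderline case $p=\infty$.

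The third step is to apply Theorem \ref{CD:thm} to the rescaled problem with $p=\infty$. For any $\d'\in(0,\g)$,
\begin{equation*}
|\CD_{\tilde\varepsilon,\tilde R}(\tilde a; \tilde\phi, f)|\le C_{\g,\d'}\1 f\1_2 \tilde R^{\d'} A_{N,\infty}(\tilde a)^\g \le C_{\g,\d'}\1 f\1_2 \Bigl(\frac{2R}{\tau_j}\Bigr)^{\!\d'} v_j^\g.
\end{equation*}
Taking $\d'=\d$ when $\d>0$ (or any fixed $\d'\in(0,\g)$ together with $\tilde R\le 1$ when $\d=0$), I would deduce
\begin{equation*}
|\CD_{\varepsilon,R}(a; \phi_j, f)|\le C_\d \1 f\1_2 R^\d\, v_j^\g\, \tau_j^{-\d}.
\end{equation*}
Finally, since $v\asymp v_j$ and $\tau\asymp \tau_j$ on $I_j$ and $|I_j|=\tau_j$, one has $\int_{I_j} v(\xi)^\g/\tau(\xi)^{1+\d}\, d\xi \ge c\, v_j^\g \tau_j^{-\d}$, which converts the above estimate to \eqref{inside:eq}.

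The main obstacle is the careful bookkeeping of the rescaling, in particular the need to control $\tilde a$ on $(-2,2)$ (rather than just $(-1,1)$) by exploiting that \eqref{Lip:eq}, \eqref{w:eq}, \eqref{scales:eq} force $\tau$, $v$ and all rescaled derivatives of $a$ to behave essentially like constants on the doubled interval $J(\eta_j)$, together with the verification that the borderline Sobolev exponent $p=\infty$ is precisely what gives the power $R^\d$ in \eqref{inside:eq}.
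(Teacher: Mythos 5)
Your proposal is correct and follows essentially the same route as the paper: rescale $I_j$ by $R_j=\tau_j/2$, observe that $\tilde\phi_j$ and $\tilde a$ satisfy the hypotheses of Theorem \ref{CD:thm} with $p=\infty$ and $A_{N,\infty}(\tilde a)\le C v(\eta_j)$ uniformly in $j$, apply that theorem, and convert back using $\tau\asymp\tau_j$, $v\asymp v_j$ on $I_j$. Your explicit handling of the endpoint case $\d=0$ (taking any $\d'\in(0,\g)$ and using $\tilde R\le 1$) is a small clarification over the paper's wording but does not change the argument.
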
  
  
\begin{proof} 
	Without loss of generality assume $\1 f\1_2 = 1$. 
Let 
\begin{equation*}
	\tilde a(\eta) = a(\eta_j+ R_j\eta),\ 
	\tilde{\phi}_j(\eta) = \z_j(\eta_j+R_j\eta).
\end{equation*}
Thus by \eqref{partition:eq}, $\|\tilde{\phi}_j\|_{\plainC1}\le C$,\ 
$\supp\tilde \phi_j\subset(-1, 1)$ uniformly in $j$, and in view of \eqref{w:eq}, 
\eqref{scales:eq},
\[
|\tilde a^{(n)}(\eta)|\le C_n v(\eta_j),\ \forall \eta: |\eta|\le 2,
\]
for all $n = 1, \dots, N$, so that 
$A_{N, \infty}(\tilde a)\le C v(\eta_j)$, see \eqref{dernorm:eq} for the 
definition.  
 Thus by Theorem \ref{CD:thm} with $p=\infty$, and arbitrary $\d\in [0, \g)$,
\begin{equation*}
|\CD_{\varepsilon, R_j}(a; \phi_j, f)| = 
|\CD_{\varepsilon R_j^{-1}, R R_j^{-1}}(\tilde a, \tilde \phi_j, f)|\le C (R R_j^{-1})^\d v(\eta_j)^\g.
\end{equation*}
The right-hand side is trivially estimated by 
\begin{equation*}
C R^\d \int_{I_j} v(\eta_j)\tau_j^{-1-\d} d\xi.	
\end{equation*}
By virtue of \eqref{Lip:eq} and \eqref{w:eq}, this 
is bounded by the right-hand side of \eqref{inside:eq}. 
This completes the proof. 
\end{proof}  
 
\begin{cor}\label{sub:cor}
Suppose that $\tau_{\textup{\tiny inf}} = \inf \tau(\xi)>0$, and that 
$R\le \tau_{\textup{\tiny inf}}/2$. Then 
for any $\d\in [0, \g)$ the bound holds:
\begin{equation}\label{sub:eq}
|\CD_{\varepsilon, R}(a; 1, f)|\le 
	C_\d\1 f\1_2 R^\d \int \frac{v(\xi)^\g}{\tau(\xi)^{1+\d}} d\xi,
\end{equation}
uniformly in $\varepsilon \in (0, R]$. 
\end{cor}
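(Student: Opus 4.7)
The plan is to reduce to Lemma \ref{inside:lem} by inserting the partition of unity $\{\phi_j\}_{j\in\Z}$ subordinate to the covering $\{I_j\}$ constructed before Lemma \ref{inside:lem}. Writing $1 = \sum_{j\in\Z} \phi_j(\xi_1)$ inside the integrand in the definition \eqref{CD:eq} and using the finite intersection property of $\{I_j\}$ (so that at every $\xi_1$ only a bounded number of $\phi_j$'s are nonzero), I would justify the decomposition
\begin{equation*}
\CD_{\varepsilon, R}(a; 1, f) = \sum_{j\in\Z} \CD_{\varepsilon, R}(a; \phi_j, f),
\end{equation*}
with absolute convergence once the final bound is established.

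Next, the hypothesis $R \le \tau_{\textup{\tiny inf}}/2$ gives $R \le \tau_j/2 = R_j$ for every $j\in\Z$, which is exactly the range in which Lemma \ref{inside:lem} applies. Applying that lemma termwise with a fixed $\d\in[0,\g)$ yields
\begin{equation*}
|\CD_{\varepsilon, R}(a; \phi_j, f)| \le C_\d \1 f\1_2 R^\d \int_{I_j} \frac{v(\xi)^\g}{\tau(\xi)^{1+\d}}\, d\xi,
\end{equation*}
uniformly in $\varepsilon\in (0, R]$.

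Summing over $j$ and invoking the finite intersection property of $\{I_j\}$ one more time to bound $\sum_j \mathbf{1}_{I_j}(\xi)$ by a constant depending only on $\nu$, I obtain
\begin{equation*}
\sum_{j\in\Z}|\CD_{\varepsilon, R}(a; \phi_j, f)| \le C_\d \1 f\1_2 R^\d \int \frac{v(\xi)^\g}{\tau(\xi)^{1+\d}}\, d\xi,
\end{equation*}
which gives \eqref{sub:eq} and simultaneously justifies the interchange of sum and integral in the first step. There is no serious obstacle here: the only mild point to check is the locally finite decomposition, which is immediate from the finite intersection property and the compactness of $\supp \phi_j$; once that is in place the estimate is a direct application of Lemma \ref{inside:lem}.
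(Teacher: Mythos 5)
Your proposal is correct and matches the paper's argument exactly: the paper proves the corollary in a single sentence by observing that $R\le\tau_{\textup{\tiny inf}}/2\le R_j$ allows Lemma~\ref{inside:lem} to be applied termwise after inserting the partition of unity, and then summing over $j$ using the finite intersection property of $\{I_j\}$. You have simply spelled out the details the paper leaves implicit.
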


\begin{proof}[Proof of Corollary 
\ref{sub:cor} and Theorem \ref{coeffscales:thm}] 
Since the covering $\{I_j\}$ possesses the finite intersection property,  
the bound \eqref{sub:eq} follows from the bound \eqref{inside:eq} by summing over all $j$'s.

Using the bound \eqref{inside:eq} with $R = R_j$ we obtain that
\begin{equation*}
|\CD_{\varepsilon, R_j}(a; \phi_j, f)|	\le 
	C\1 f\1_2 \int_{I_j}  
	\frac{v(\xi)^\g}{\tau(\xi)} d\xi,
\end{equation*}
uniformly in $\varepsilon\in (0, 1]$. 
In view of the finite intersection property of the covering $\{I_j\}$, we get
\begin{equation*}
\sum_{j\in\Z}|\CD_{\varepsilon, R_j}(a; \phi_j, f)|
\le C\1 f\1_2 \int \frac{v(\xi)^\g}{\tau(\xi)} d\xi.
\end{equation*}	
In view of the representation \eqref{scales_rep:eq} this bound 
together with \eqref{outside:eq} lead to \eqref{coeffscales:eq}. 
\end{proof}

For Theorem \ref{coeffscales1:thm} we give only a sketch of the proof.
The details are either the same as in the preceding proof, or they 
can be easily filled in. 

\begin{proof}[Sketch of the proof of Theorem \ref{coeffscales1:thm}] 
By \eqref{gagliardo:eq}, the proof reduces to estimating 
the integral
\begin{equation*}
\iint \frac{|a(\xi_1) - a(\xi_2)|^{1+\varkappa}}{|\xi_1-\xi_2|^2} d\xi_1 d\xi_2.
\end{equation*}
As in Lemma \ref{outside:lem} one can show that 
\begin{equation*}
\sum_j  \underset{|\xi_1-\xi_2|> R_j}
\iint \phi_j(\xi_1)\frac{|a(\xi_1) - a(\xi_2)|^{1+\varkappa}}{|\xi_1-\xi_2|^2} d\xi_1 d\xi_2
\le C\int \frac{|a(\xi)-a_0|^{1+\varkappa}}{\tau(\xi)}d\xi
\end{equation*}
Furthermore, if $|\xi_1-\xi_2|<R_j$, $\xi_1\in I_j$, then by \eqref{scales:eq}, 
\eqref{Lip:eq} and \eqref{w:eq}, 
\begin{equation*}
|a(\xi_1) - a(\xi_2)|\le Cv(\eta_j) \tau_j^{-1}|\xi_1-\xi_2|, 
\end{equation*}
so that
\begin{align*}
\underset{|\xi_1-\xi_2|< R_j}
\iint \phi_j(\xi_1)&\ \frac{|a(\xi_1) - a(\xi_2)|^{1+\varkappa}}{|\xi_1-\xi_2|^2} d\xi_1 d\xi_2\\[0.2cm]
\le &\ Cv(\eta_j)^{1+\varkappa} \tau_j^{-1-\varkappa}\int_{I_j}   \int_{|\xi_1-\xi_2|<R_j} |\xi_1-\xi_2|^{\varkappa-1} d\xi_2 d\xi_1\\[0.2cm]
\le &\ Cv(\eta_j)^{1+\varkappa} \le C\int_{I_j} v(\eta_j)^{1+\varkappa}\tau_j^{-1}d\xi. 
\end{align*}
Now, as in the proof of Lemma \ref{inside:lem}, the last integral is bounded by 
$\int v^{1+\varkappa}\tau^{-1} d\xi$. This 
completes the proof of Theorem \ref{coeffscales1:thm}.
\end{proof}

We illustrate the usefulness of the bound \eqref{coeffscales:eq} with the example of 
the symbol 
\begin{equation}\label{positiveT:eq} 
a(\bxi) = a_{T}(\bxi) = \frac{1}{1+ \exp{\frac{\xi^2 - \mu}{T}}},
\end{equation}
where $T\in (0, T_0]$, $T_0 >0$ and $\mu\in\R$ 
are some parameters. This symbol is nothing but 
the Fermi function for non-interacting Fermions at positive temperature 
$T$ and chemical potential $\mu$, see e.g. \cite{LeSpSo1}. 
We are interested in the small $T$ behaviour, whereas 
the value $\mu$ is kept fixed. Assume for simplicity that $\mu = 1$.  
It is clear that in a neighbourhood of 
the points $\xi = \pm 1$ the derivatives 
of $a$ grow as $T\to 0$.  
It is straightforward to check that 
\begin{equation}\label{an:eq}
|a^{(n)}(\xi)|\le C_n a(\xi) (1-a(\xi))(1+|\xi|)^n T^{-n}, n = 1, 2, \dots,
\end{equation}
and 
\begin{equation}\label{a1-a:eq}
a(\xi) \bigl(1-a(\xi))\le \exp{\biggl(-\frac{|\xi^2 - 1|}{T}\biggr)},\ \xi\in\R. 
\end{equation}
Thus Theorem \ref{bbound:thm} with any $p \in (1, \infty]$ leads to the estimate
\begin{equation}\label{prim:eq}
| \CB(a; f)|\le C\1 f\1_1 + \tilde C\1 f\1_2 T^{-N\g + \frac{\g}{p}}.
\end{equation}
The right-hand side is greater than $C T^{-1}$, since $N\ge \g^{-1} + p^{-1}$.

Let us now estimate $\CB(a_T; f)$ in a different way, by applying 
Theorem \ref{coeffscales:thm}. 
Since
\begin{equation*}
(1+|\xi|)^n T^{-n} \exp{\biggl(-\frac{|\xi^2 - 1|}{2T}\biggr)}
\le C_n (||\xi|-1| + T )^{-n},\ 
C_n = C_n(T_0),
\end{equation*}
in view of \eqref{an:eq} and \eqref{a1-a:eq}, we have 
\begin{equation*} 
|a^{(n)}(\xi)|\le C_n  (||\xi|-1| + T )^{-n} \exp{\biggl(-\frac{|\xi^2 - 1|}{2T}\biggr)}, 
n = 1, 2, \dots.
\end{equation*}
This shows that $a$ satisfies \eqref{scales:eq} with
\begin{equation*}
a_0 = 0,\ \  
\tau(\xi) = \frac{1}{2}(||\xi|-1| + T ),\ \ v(\xi) = v_\b(\xi) = (1+|\xi|)^{-\b},
\end{equation*}
with an arbitrary $\b >0$. Consequently, by Theorem \ref{coeffscales:thm}, 
\begin{align*}
|\CB(a; f)|\le &\ C\1 f\1_2 \int (||\xi|-1| + T )^{-1} (1+|\xi|)^{-\b\g}d\xi\\[0.2cm]
\le &\ C\1 f\1_2 \bigl(|\log T| + 1\bigr).
\end{align*}
 This bound is clearly sharper than \eqref{prim:eq}, and its precision (as $T\to0$) 
 is confirmed by the 
 asymptotic formula for $\CB(a_T; f)$, $T\to 0$, announced in \cite{LeSpSo1}. 
 
\textit{Acknowledgement} 
The author is grateful to W. Spitzer for the careful 
reading of the manuscript, and for useful remarks.

\bibliographystyle{amsplain}

\end{document}